\begin{document}
\newcommand{\m}{\textnormal{mult}}
\newcommand{\ra}{\rightarrow}
\newcommand{\la}{\leftarrow}
\renewcommand{\baselinestretch}{1.1}

\theoremstyle{plain}
\newtheorem{thm}{Theorem}[section]
\newtheorem{cor}[thm]{Corollary}
\newtheorem{con}[thm]{Conjecture}
\newtheorem{cla}[thm]{Claim}
\newtheorem{lm}[thm]{Lemma}
\newtheorem{prop}[thm]{Proposition}
\newtheorem{example}[thm]{Example}

\theoremstyle{definition}
\newtheorem{dfn}[thm]{Definition}
\newtheorem{alg}[thm]{Algorithm}
\newtheorem{prob}[thm]{Problem}
\newtheorem{rem}[thm]{Remark}

\renewcommand{\baselinestretch}{1.1}

\title{\bf Properties of $\theta$-super positive graphs}
\author{
Cheng Yeaw Ku
\thanks{ Department of Mathematics, National University of Singapore, Singapore 117543. E-mail: matkcy@nus.edu.sg} \and K.B. Wong \thanks{
Institute of Mathematical Sciences, University of Malaya, 50603 Kuala Lumpur, Malaysia. E-mail:
kbwong@um.edu.my.} } \maketitle

\begin{abstract}\noindent
Let the matching polynomial of a graph $G$ be denoted by $\mu (G,x)$. A graph $G$ is said to be $\theta$-super positive if  $\mu(G,\theta)\neq 0$ and $\mu(G\setminus v,\theta)=0$ for all $v\in V(G)$. In particular, $G$ is $0$-super positive if and only if $G$ has a perfect matching. While much is known about $0$-super positive graphs, almost nothing is known about $\theta$-super positive graphs for $\theta \not = 0$. This motivates us to investigate the structure of $\theta$-super positive graphs in this paper. Though a $0$-super positive graph may not contain any cycle, we show that a $\theta$-super positive graph with $\theta \not = 0$ must contain a cycle. We introduce two important types of $\theta$-super positive graphs, namely $\theta$-elementary and $\theta$-base graphs. One of our main results is that any $\theta$-super positive graph $G$ can be constructed by adding certain type of edges to a disjoint union of $\theta$-base graphs; moreover, these $\theta$-base graphs are uniquely determined by $G$. We also give a characterization of $\theta$-elementary graphs: a graph $G$ is $\theta$-elementary if and only if the set of all its $\theta$-barrier sets form a partition of $V(G)$. Here, $\theta$-elementary graphs and $\theta$-barrier sets can be regarded as $\theta$-analogue of elementary graphs and Tutte sets in classical matching theory.
\end{abstract}

\bigskip\noindent
{\sc keywords:} matching polynomial, Gallai-Edmonds decomposition, elementary graph, barrier sets, extreme sets

\section{Introduction}

We begin by introducing matching polynomials with an interest in the multiplicities of their roots. This will lead us to a recent extension of the celebrated Gallai-Edmonds Strcuture Theorem by Chen and Ku \cite{KC} which will be useful later in our study of $\theta$-super positive graphs.

All the graphs in this paper are simple and finite. The vertex set and edge set of a graph $G$ will be denoted by $V(G)$ and $E(G)$, respectively.

\begin {dfn}\label {I:D1}  An $r$-\emph {matching} in a graph $G$ is a set of $r$ edges, no two of which have a vertex in common. The number of $r$-matchings in $  G$ will be denoted by $p( G,r)$. We set $p(G,0)=1$ and define the \emph {matching polynomial} of $G$ by
\begin {equation}
\mu ( G,x)=\sum_{r=0}^{\lfloor n/2\rfloor} (-1)^rp(G,r)x^{n-2r}.\notag
\end {equation}
We denote the multiplicity of $\theta$ as a root of $\mu(G,x)$ by $\textnormal {mult} (\theta,G)$. Let $u\in V(G)$, the graph obtained from $G$ by deleting the vertex $u$ and all edges that contain $u$ is  denoted by $G\setminus u$. Inductively if $u_1,\dots, u_k\in V(G)$, $G\setminus u_1\cdots u_k=(G\setminus u_1\cdots u_{k-1})\setminus u_k$. Note that the order in which the vertices are being deleted is not important, that is, if $i_1,\dots, i_k$ is a permutation  of $1,\dots, k$, we have  $G\setminus u_1\cdots u_k= G\setminus u_{1_1}\cdots u_{i_k}$. Furthermore, if  $X=\{u_1,\dots, u_k\}$, we set $G\setminus X=G\setminus u_1\cdots u_k$. If $H$ is a subgraph of $G$, by an abuse of notation, we have $G \setminus H = G \setminus V(H)$. For example, if $p=v_1v_2\dots v_n$ is a path in $G$ then $G\setminus p=G\setminus v_1v_2\cdots v_n$. If $e$ is an edge of $G$, let $G-e$ denote the graph obtained from $G$ by deleting the edge $e$ from $G$. Inductively, if $e_{1}, \ldots, e_{k} \in E(G)$, $G-e_{1}\cdots e_{k} = (G-e_{1} \cdots e_{k-1} )-e_{k}$.

A graph $G$ is said to have a \emph{perfect matching} if it has a $n/2$-matching ($n$ must be even). This is equivalent to  $\textnormal{mult} (0,G)=0$, that is, $0$ is not a root of $\mu(G,x)$. Recall that in the literature $\m(0,G)$ is also known as the {\em deficiency} of $G$ which is the number of vertices of $G$ missed by some maximum matching.
\end {dfn}

The following are some basic properties of $\mu (G,x)$.

\begin {thm}\label{matching_property} \textnormal { \cite[Theorem 1.1 on p. 2] {G0}}
\begin {itemize}
\item [(a)] $\mu (  G\cup  H,x)=\mu (  G,x)\mu (  H,x)$ where $  G$ and $  H$ are disjoint graphs,
\item [(b)] $\mu (  G,x)=\mu (  G - e,x)-\mu (  G\setminus uv, x)$ if $e=(u,v)$ is an edge of $  G$,
\item [(c)] $\mu (  G,x)=x\mu (  G\setminus u,x)-\sum_{i\sim u} \mu (  G\setminus ui,x)$ where $i\sim u$ means $i$ is adjacent to $u$,
\item [(d)] $\displaystyle \frac {d}{dx} \mu (  G,x)=\sum_{i\in V(  G)} \mu (  G\setminus i,x)$ where $V(  G)$ is the vertex set of $  G$.
\end {itemize}
\end {thm}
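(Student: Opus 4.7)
The plan is to establish all four identities by counting $r$-matchings directly and then substituting into the definition of $\mu(G,x)$. Each part reduces to an elementary combinatorial identity for $p(G,r)$.

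For part (a), every $r$-matching of a disjoint union $G\cup H$ decomposes uniquely into an $i$-matching of $G$ and an $(r-i)$-matching of $H$, giving the convolution $p(G\cup H,r)=\sum_{i=0}^{r}p(G,i)p(H,r-i)$. Since $|V(G\cup H)|=|V(G)|+|V(H)|$, this is exactly the Cauchy product identity for the polynomials. For part (b), I would partition the $r$-matchings of $G$ according to whether they contain the edge $e=uv$: those that do not are precisely the $r$-matchings of $G-e$, while those that do correspond bijectively (by deleting $e$) to the $(r-1)$-matchings of $G\setminus uv$. This yields $p(G,r)=p(G-e,r)+p(G\setminus uv,r-1)$; substituting and accounting for the two-vertex drop in $G\setminus uv$ produces the stated sign. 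Part (c) is analogous: partition the $r$-matchings of $G$ by whether the vertex $u$ is saturated, producing $p(G,r)=p(G\setminus u,r)+\sum_{i\sim u}p(G\setminus ui,r-1)$, then substitute, noting that in the first term $G\setminus u$ has $n-1$ vertices while in the second term $G\setminus ui$ has $n-2$.

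Part (d) is the most delicate. Differentiating the definition gives $\frac{d}{dx}\mu(G,x)=\sum_{r}(-1)^{r}(n-2r)p(G,r)x^{n-2r-1}$ with $n=|V(G)|$, while $\sum_{i\in V(G)}\mu(G\setminus i,x)=\sum_{r}(-1)^{r}\bigl(\sum_{i\in V(G)}p(G\setminus i,r)\bigr)x^{n-1-2r}$. Matching the coefficient of $x^{n-1-2r}$ reduces the claim to the single identity $\sum_{i\in V(G)}p(G\setminus i,r)=(n-2r)p(G,r)$, which I would prove by double-counting the set of pairs $(i,M)$ where $M$ is an $r$-matching of $G$ and $i$ is a vertex of $G$ not incident to any edge of $M$: fixing $M$ first yields $n-2r$ unsaturated choices per matching, while fixing $i$ first counts the $r$-matchings of $G\setminus i$. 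The only real bookkeeping is in (d); parts (a)--(c) are straightforward translations of matching counts. Alternatively, (d) could be deduced inductively from (c) by differentiating the recurrence, but I expect the double-count to be cleaner and to make the exponent accounting more transparent.
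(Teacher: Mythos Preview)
Your proof is correct. The paper does not actually prove this theorem; it merely cites it from Godsil's \emph{Algebraic Combinatorics} (Theorem~1.1, p.~2) and uses it as a black box throughout. Your direct combinatorial arguments --- the convolution for (a), the edge/vertex partitions of $r$-matchings for (b) and (c), and the double-count of pairs $(i,M)$ for (d) --- are exactly the standard proofs one finds in that reference, and every step of the bookkeeping (the shift $s=r-1$, the exponent adjustments for $n-1$ and $n-2$ vertices, the identity $\sum_{i}p(G\setminus i,r)=(n-2r)p(G,r)$) checks out.
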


It is well known that all roots of $\mu(G,x)$ are real. Throughout, let $\theta$ be a real number. The multiplicity of a matching polynomial root satisfies the the following interlacing property:

\begin {lm}\label{basic_inequality}\textnormal{\cite[Corollary 1.3 on p. 97]{G0} (Interlacing)} Let $  G$ be a graph  and $u\in V(  G)$. Let $\theta$ be a real number. Then
\begin {equation}
\textnormal {mult} (\theta,   G)-1\leq \textnormal {mult} (\theta,   G\setminus u)\leq \textnormal {mult} (\theta,  G)+1.\notag
\end {equation}
\end {lm}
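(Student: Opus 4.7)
The plan is to reduce the two-sided multiplicity bound to the classical interlacing of the real roots of $\mu(G\setminus u,x)$ and $\mu(G,x)$. Once interlacing is available, the lemma follows from a short count: if $\theta$ appears with multiplicity $k$ in the ordered root sequence of $\mu(G,x)$, then between those $k$ equal values at least $k-1$ roots of $\mu(G\setminus u,x)$ are trapped (all forced to equal $\theta$), while at most one additional copy of $\theta$ can appear as a root of $\mu(G\setminus u,x)$ immediately outside this block on either side. This yields $\textnormal{mult}(\theta,G)-1\le\textnormal{mult}(\theta,G\setminus u)\le\textnormal{mult}(\theta,G)+1$.

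To prove the interlacing, I would go through Godsil's path tree $T=T(G,u)$, whose vertices are the paths in $G$ starting at $u$ (with the trivial one-vertex path $u$ as the root) and whose edges join a path to each of its one-edge extensions. The central identity is
\begin{equation*}
\mu(G,x)\,\mu(T\setminus u,x)=\mu(G\setminus u,x)\,\mu(T,x),
\end{equation*}
which I would establish by induction on $|V(G)|$, expanding both sides with the vertex recurrence of Theorem~\ref{matching_property}(c) at $u$ and at its image in $T$ and then matching terms. Because $T$ is a forest, $\mu(T,x)$ coincides with the characteristic polynomial of the adjacency matrix $A(T)$, and $\mu(T\setminus u,x)$ with the characteristic polynomial of the principal submatrix obtained by deleting the row and column indexed by $u$. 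Cauchy's interlacing theorem therefore gives $|\textnormal{mult}(\theta,T)-\textnormal{mult}(\theta,T\setminus u)|\le 1$, and comparing the multiplicities of $(x-\theta)$ on both sides of the displayed identity yields
\begin{equation*}
\textnormal{mult}(\theta,G\setminus u)-\textnormal{mult}(\theta,G)=\textnormal{mult}(\theta,T\setminus u)-\textnormal{mult}(\theta,T),
\end{equation*}
so the bound transfers from $T$ to $G$.

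The main obstacle is establishing the path tree identity cleanly; once that is in hand, Cauchy interlacing and the multiplicity bookkeeping are routine. A self-contained alternative would run an induction on $|V(G)|$ directly with Theorem~\ref{matching_property}(c), using a sign-change argument to show that $\mu(G,x)$ alternates in sign between consecutive roots of $\mu(G\setminus u,x)$; this avoids the detour through trees but is less transparent and makes the bookkeeping at coincident roots more delicate.
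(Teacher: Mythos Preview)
The paper does not prove this lemma at all; it is quoted verbatim from Godsil's \emph{Algebraic Combinatorics} \cite[Corollary~1.3, p.~97]{G0} and used as a black box. So there is no ``paper's own proof'' to compare against.

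Your proposed argument is correct, and in fact it is essentially the proof given in the cited source: Godsil builds the path tree $T=T(G,u)$, proves the identity $\mu(G\setminus u,x)\,\mu(T,x)=\mu(G,x)\,\mu(T\setminus u,x)$, observes that for a forest the matching polynomial equals the characteristic polynomial of the adjacency matrix, and then invokes Cauchy interlacing for the principal submatrix $A(T\setminus u)$ of $A(T)$. Your transfer step, reading off $\textnormal{mult}(\theta,G\setminus u)-\textnormal{mult}(\theta,G)=\textnormal{mult}(\theta,T\setminus u)-\textnormal{mult}(\theta,T)$ from the identity, is exactly how the bound is pulled back from $T$ to $G$.

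One small organizational remark: your opening paragraph sets up a root-interlacing-plus-counting argument for $\mu(G,x)$ versus $\mu(G\setminus u,x)$ directly, but then the path-tree route you actually follow delivers the multiplicity inequality without ever needing that count. The two are compatible (indeed, the path-tree identity also proves that the roots of $\mu(G\setminus u,x)$ interlace those of $\mu(G,x)$), but as written the first paragraph reads like a separate plan rather than a consequence of the second. If you want a single clean line, drop the counting paragraph and go straight from Cauchy interlacing on $T$ to the displayed multiplicity identity.
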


Lemma \ref{basic_inequality} suggests that given any real number $\theta$, we can classify the vertices of a graph according to an increase of $1$ or a decrease of $1$ or no change in the multiplicity of $\theta$ upon deletion of a vertex.

\begin {dfn}\label {P:D3}\textnormal { \cite [Section 3]{G}} For any $u\in V(  G)$,
\begin {itemize}
\item [(a)] $u$ is $\theta$-\emph {essential} if $\textnormal {mult} (\theta,   G\setminus u)=\textnormal {mult} (\theta,   G)-1$,
\item [(b)] $u$ is $\theta$-\emph {neutral} if $\textnormal {mult} (\theta,   G\setminus u)=\textnormal {mult} (\theta,   G)$,
\item [(c)] $u$ is $\theta$-\emph {positive} if $\textnormal {mult} (\theta,   G\setminus u)=\textnormal {mult} (\theta,   G)+1$.
\end {itemize}
Furthermore, if $u$ is not $\theta$-essential but it is adjacent to some $\theta$-essential vertex, we say $u$ is $\theta$-special.
\end {dfn}

It turns out that $\theta$-special vertices play an important role in the Gallai-Edmonds Decomposition of a graph (see \cite {KC}). Godsil \cite[Corollary 4.3]{G} proved that a $\theta$-special vertex must be $\theta$-positive. Note that if $\textnormal {mult} (\theta,   G)=0$ then for any $u\in V(G)$, $u$ is either $\theta$-neutral or $\theta$-positive and no vertices in $G$ can be $\theta$-special. Now $V(G)$ can be partitioned into the following sets:
\begin {equation}
V(G)=D_{\theta}(G)\cup A_{\theta}(G)\cup P_{\theta}(G)\cup N_{\theta}(G),\notag
\end {equation}
where
\begin {itemize}
\item [] $D_{\theta}(G)$ is the set of all $\theta$-essential vertices in $G$,
\item [] $A_{\theta}(G)$ is the set of all $\theta$-special vertices in $G$,
\item [] $N_{\theta}(G)$ is the set of all $\theta$-neutral vertices in $G$,
\item [] $P_{\theta}(G)=Q_{\theta}(G)\setminus A_{\theta}(G)$, where  $Q_{\theta}(G)$ is the set of all $\theta$-positive vertices in $G$.
\end {itemize}
Note that there are no $0$-neutral vertices. So $N_0(G)=\varnothing$ and $V(G)=D_{0}(G)\cup A_{0}(G)\cup P_{0}(G)$.

\begin {dfn}\label {P:D4}\textnormal {\cite [Section 3]{G}} A graph $G$ is said to be $\theta$-critical if all vertices in $G$ are $\theta$-essential and $\textnormal {mult} (\theta, G)=1$.
\end {dfn}

The celebrated Gallai-Edmonds Structure Theorem describes the stability of a certain canonical decomposition of $V(G)$ with respect to the zero root of $\mu (G,x)$. In \cite {KC},  Chen and Ku extended the Gallai-Edmonds Structure Theorem to any root $\theta \not = 0$, which consists of the following two theorems:

\begin {thm}\label{Gallai_Edmond_vertex}\textnormal { \cite [Theorem 1.5]{KC} ($\theta$-Stability Lemma)} Let $G$ be a graph with $\theta$ a root of $\mu (G,x)$. If $u\in A_{\theta} (G)$ then
\begin {itemize}
\item [(i)] $D_{\theta}(G\setminus u)=D_{\theta}(G)$,
\item [(ii)] $P_{\theta}(G\setminus u)=P_{\theta}(G)$,
\item [(iii)] $N_{\theta}(G\setminus u)=N_{\theta}(G)$,
\item [(iv)] $A_{\theta}(G\setminus u)=A_{\theta}(G)\setminus \{u\}$.
\end {itemize}
\end {thm}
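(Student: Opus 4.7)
My plan is to combine the interlacing inequality (Lemma~\ref{basic_inequality}) with Godsil's result that $\theta$-special vertices are $\theta$-positive \cite[Corollary 4.3]{G} and classical identities for matching polynomials. Set $m := \m(\theta, G) \geq 1$. Because $u \in A_\theta(G)$, Godsil's theorem yields $u \in Q_\theta(G)$, and hence $\m(\theta, G \setminus u) = m + 1$. This single fact is what drives everything: each vertex $v \neq u$ has its reference multiplicity shifted by $+1$ upon passing from $G$ to $G \setminus u$.

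First I would prove the ``one-sided'' containments by a double interlacing argument. For any $v \in V(G) \setminus \{u\}$, Lemma~\ref{basic_inequality} bounds $\m(\theta, G \setminus \{u, v\})$ both from $G \setminus v$ (by deleting $u$) and from $G \setminus u$ (by deleting $v$). When $v \in D_\theta(G)$, the two intervals $\{m-2, m-1, m\}$ and $\{m, m+1, m+2\}$ meet only at $m$, forcing $v \in D_\theta(G \setminus u)$; hence $D_\theta(G) \subseteq D_\theta(G \setminus u)$. The analogous computation for $v \in N_\theta(G)$ gives $N_\theta(G) \subseteq D_\theta(G \setminus u) \cup N_\theta(G \setminus u)$. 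Together these show that no vertex of $G$ can \emph{rise} in $\theta$-status on deletion of $u$.

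The reverse direction --- ruling out \emph{drops} in $\theta$-status (e.g., a vertex of $N_\theta(G)$ becoming essential in $G \setminus u$, or a vertex of $Q_\theta(G) \setminus \{u\}$ descending to $N_\theta(G \setminus u)$ or $D_\theta(G \setminus u)$) --- is the main obstacle, since interlacing alone does not preclude these. To handle it I would invoke the classical matching polynomial identity
\[
\mu(G \setminus u, x)\, \mu(G \setminus v, x) - \mu(G, x)\, \mu(G \setminus \{u, v\}, x) = \Bigl(\sum_P \mu(G \setminus P, x)\Bigr)^{\!2},
\]
where $P$ ranges over paths from $u$ to $v$ in $G$. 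Because the right-hand side is a perfect square, the multiplicity of $\theta$ on the left must be even, giving parity constraints on the type pair $(k_v, k'_v) := (\m(\theta, G \setminus v) - m,\ \m(\theta, G \setminus \{u, v\}) - (m+1))$. In the residual cases not excluded by parity, the $\theta$-\emph{special} hypothesis on $u$ becomes crucial: $u$ has a $\theta$-essential neighbor $w$, and anchoring the argument at $v = w$ together with the recurrence $\mu(G, x) = x\, \mu(G \setminus u, x) - \sum_{i \sim u} \mu(G \setminus \{u, i\}, x)$ of Theorem~\ref{matching_property}(c) should pin down the leading Taylor coefficients at $\theta$ and propagate type-preservation to every $v \neq u$. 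This establishes (i), (ii), (iii) simultaneously.

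Finally, (iv) is a formal consequence of (i)--(iii): a vertex $v \neq u$ lies in $A_\theta(G \setminus u)$ iff it is $\theta$-positive in $G \setminus u$ and adjacent (in $G \setminus u$) to some $w \in D_\theta(G \setminus u)$, which by (i)--(ii) is equivalent to $v \in Q_\theta(G)$ and $v$ adjacent in $G$ to some $w \in D_\theta(G)$ --- noting that $u \notin D_\theta(G)$, so deleting $u$ cannot cost $v$ a $D_\theta$-neighbor. That is, $v \in A_\theta(G)$, giving $A_\theta(G \setminus u) = A_\theta(G) \setminus \{u\}$. The principal difficulty throughout lies in Step~3, where delicate Taylor-coefficient bookkeeping on both sides of the square identity will be needed to supplement interlacing; this is where the adjacency of $u$ to an essential vertex (rather than mere $\theta$-positivity of $u$) makes the argument go through.
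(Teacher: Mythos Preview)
The paper does not contain a proof of Theorem~\ref{Gallai_Edmond_vertex}: it is quoted verbatim from \cite[Theorem~1.5]{KC} and used throughout as a black box. There is therefore no ``paper's own proof'' against which to compare your proposal.

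On the proposal itself, two remarks. First, the identity you invoke is misstated: the Heilmann--Lieb identity (Lemma~\ref{heilmann_lieb} in this paper) has $\sum_{P} \mu(G\setminus P,x)^{2}$ on the right-hand side, a \emph{sum of squares}, not the square of a sum. The parity conclusion you want survives this correction, since a sum of squares of real polynomials still has even order of vanishing at any real root, but the statement should be fixed.

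Second, and more substantively, your outline correctly isolates the real difficulty --- ruling out that some $v\in N_{\theta}(G)$ or $v\in Q_{\theta}(G)\setminus\{u\}$ becomes $\theta$-essential in $G\setminus u$ --- and correctly notes that neither double interlacing nor Heilmann--Lieb parity disposes of these cases. (Indeed, for $v\in Q_{\theta}(G)$ with $\m(\theta,G\setminus uv)=m$, both products on the left of the identity have $\theta$-multiplicity $2m$, which is even, so parity gives nothing.) From that point on, however, the plan is programmatic rather than an argument: phrases like ``should pin down the leading Taylor coefficients'' and ``delicate bookkeeping will be needed'' do not explain how adjacency of $u$ to a single $\theta$-essential vertex $w$ propagates to control the type of an \emph{arbitrary} $v$. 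That propagation is exactly the content of the Chen--Ku proof, and your sketch does not supply it; the recurrence of Theorem~\ref{matching_property}(c) at $u$ relates $\mu(G,x)$ to the polynomials $\mu(G\setminus ui,x)$ for neighbours $i$ of $u$, but by itself says nothing about $\mu(G\setminus uv,x)$ for a non-neighbour $v$. Your derivation of (iv) from (i)--(iii), on the other hand, is clean and correct.
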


\begin {thm}\label{Gallai_lemma}\textnormal {\cite[Theorem 1.7]{KC} ($\theta$-Gallai's Lemma)} If $G$ is connected and every vertex of $G$ is $\theta$-essential then $\textnormal {mult} (\theta,   G)=1$.
\end {thm}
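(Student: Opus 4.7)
I would argue by induction on $n=|V(G)|$. The base case $n=1$ is immediate: with $\mu(\varnothing,x)=1$, the lone vertex being $\theta$-essential forces $\m(\theta,G)=1$.

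For $n\geq 2$, let $G$ be connected with every vertex $\theta$-essential and suppose toward a contradiction that $k:=\m(\theta,G)\geq 2$. Fix any $u\in V(G)$; since $u$ is $\theta$-essential, $\m(\theta,G\setminus u)=k-1\geq 1$. Writing $G\setminus u=C_1\sqcup\cdots\sqcup C_s$ for its connected components, Theorem \ref{matching_property}(a) gives $\m(\theta,G\setminus u)=\sum_j\m(\theta,C_j)$, so some component $C$ satisfies $\m(\theta,C)\geq 1$. The heart of the argument is to verify that every $w\in V(C)$ remains $\theta$-essential in $C$; by multiplicativity of $\mu$ across components, this is equivalent to the equality
\[
\m(\theta,G\setminus u\setminus w)=\m(\theta,G\setminus u)-1=k-2.
\]
Once this is in hand, $C$ is a connected graph on fewer than $n$ vertices satisfying the same hypothesis, so induction delivers $\m(\theta,C)=1$. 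The same reasoning applied to every component of $G\setminus u$ carrying a $\theta$-root would show each is $\theta$-critical with matching-polynomial multiplicity $1$; since $G$ is connected, $u$ has a neighbour in every component, so $u$ must be adjacent only to $\theta$-critical components, pinning down enough structure to contradict $k\geq 2$ via a further application of Theorem \ref{matching_property}(c) at $u$ (or Theorem \ref{matching_property}(b) across a $uw$ edge into a $\theta$-critical component).

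The main obstacle is precisely the displayed equality, because interlacing (Lemma \ref{basic_inequality}) alone allows $\m(\theta,G\setminus u\setminus w)\in\{k-2,k-1,k\}$. To rule out the larger two values I would expand Theorem \ref{matching_property}(c) at $u$,
\[
\mu(G,x)=x\mu(G\setminus u,x)-\sum_{i\sim u}\mu(G\setminus ui,x),
\]
and match the coefficients of the first two nontrivial powers of $(x-\theta)$ on the two sides. The cases $\theta=0$ and $\theta\neq 0$ require separate handling, since the leading factor $x$ in front of $\mu(G\setminus u,x)$ contributes to the vanishing order only when $\theta=0$. The hypothesis that every vertex of $G$ is $\theta$-essential, together with interlacing applied to each graph $G\setminus i$, constrains $\m(\theta,G\setminus ui)$ to lie in $\{k-2,k-1,k\}$ for each neighbour $i$ of $u$. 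The coefficient comparison should then force $\m(\theta,G\setminus u\setminus w)=k-2$ for every neighbour $w$ of $u$, and connectedness of $C$ combined with an iteration of this argument inside $G\setminus u$ would propagate the equality to all of $V(C)$. The delicate point I anticipate will be ruling out hidden cancellations among the $(x-\theta)^{k-2}$-coefficients of the $\mu(G\setminus ui,x)$, which may require invoking a positivity or sign property of matching polynomial values in addition to the interlacing bound.
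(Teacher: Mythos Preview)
First, note that this paper does not prove Theorem~\ref{Gallai_lemma}: it is quoted from \cite[Theorem~1.7]{KC} as a background result and used as a black box thereafter. There is thus no proof here to compare your proposal against.

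On the proposal itself, the gap you flag is real and is not closed by the tools you invoke. From Theorem~\ref{matching_property}(c) one obtains, for $\theta\neq 0$,
\[
\sum_{i\sim u}\mu(G\setminus ui,x)\;=\;x\,\mu(G\setminus u,x)-\mu(G,x),
\]
whose right side vanishes at $\theta$ to order exactly $k-1$. Since each summand on the left has order at least $k-2$ by interlacing, this identity only forces the $(x-\theta)^{k-2}$-coefficients of the summands to \emph{sum to zero}; it is perfectly consistent with every summand having order $\ge k-1$, which is precisely the case you must exclude. For general $\theta\neq 0$ there is no positivity of the relevant Taylor coefficients of $\mu$ to break this tie, so the coefficient comparison does not deliver $\m(\theta,G\setminus uw)=k-2$. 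Even granting that equality for neighbours $w$ of $u$, your ``iteration inside $G\setminus u$'' is a different argument: the recurrence at $u$ says nothing about non-neighbours, and applying the analogous recurrence at a vertex of $C$ requires knowing that every vertex of $G\setminus u$ is $\theta$-essential there, which is exactly what you are trying to prove. Finally, the concluding contradiction is also not in place: even if every component of $G\setminus u$ were $\theta$-critical, that only gives $k-1$ such components, and neither Theorem~\ref{matching_property}(b) nor (c) at $u$ immediately rules out $k\ge 2$. The published proofs (Godsil, and Chen--Ku in \cite{KC}) avoid these obstacles via the path-tree correspondence, which gives structural control over essential vertices of $G\setminus u$ that the one-step recurrence by itself cannot supply.
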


Theorem \ref{Gallai_Edmond_vertex} asserts that the decomposition of $V(G)$ into $D_{\theta}(G)$, $P_{\theta}(G)$, $N_{\theta}(G)$ and $A_{\theta}(G)$ is {\em stable} upon deleting a $\theta$-special vertex of $G$. We may delete every such vertex one by one until there are no $\theta$-special vertices left. Together with Theorem \ref{Gallai_lemma}, it is not hard to deduce the following whose proof is omitted.

\begin {cor}\label{Gallai_Edmond_decom}\
\begin {itemize}
\item [(i)]  $A_{\theta}(G\setminus A_{\theta}(G))=\varnothing$, $D_{\theta}(G\setminus A_{\theta}(G))=D_{\theta}(G)$, $P_{\theta}(G\setminus A_{\theta}(G))=P_{\theta}(G)$, and $N_{\theta}(G\setminus A_{\theta}(G))=N_{\theta}(G)$.
\item [(ii)] $G\setminus A_{\theta}(G)$ has exactly $\vert A_{\theta}(G)\vert+\textnormal {mult} (\theta, G)$ $\theta$-critical components.
\item [(iii)] If $H$ is a component of $G\setminus A_{\theta}(G)$ then either $H$ is $\theta$-critical or $\textnormal {mult} (\theta, H)=0$.
\item [(iv)] The subgraph induced by $D_{\theta}(G)$ consists of all the  $\theta$-critical components in $G\setminus A_{\theta}(G)$.
\end {itemize}
\end {cor}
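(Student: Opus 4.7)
The plan is to peel off the $\theta$-special vertices one by one using Theorem \ref{Gallai_Edmond_vertex} and then analyze the resulting graph $G' := G\setminus A_{\theta}(G)$ using Theorem \ref{Gallai_lemma} together with the identities in Theorem \ref{matching_property}.

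For (i), I would induct on $|A_{\theta}(G)|$: deleting any single $u \in A_{\theta}(G)$ fixes $D_{\theta}, P_{\theta}, N_{\theta}$ and removes $u$ from $A_{\theta}$ by Theorem \ref{Gallai_Edmond_vertex}, and part (iv) of the same theorem guarantees that the remaining elements of $A_{\theta}(G)$ are still $\theta$-special in $G\setminus u$, so the induction continues. Moreover, each $\theta$-special vertex is $\theta$-positive (Godsil's result recalled in the paper), so each such deletion raises the multiplicity of $\theta$ by exactly one, giving
\[
\textnormal{mult}(\theta, G') = \textnormal{mult}(\theta, G) + |A_{\theta}(G)|. \qquad (*)
\]

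For (iv), observe that $A_{\theta}(G') = \varnothing$ by (i), so any neighbor in $G'$ of a vertex in $D_{\theta}(G') = D_{\theta}(G)$ must itself be $\theta$-essential (otherwise it would be $\theta$-special in $G'$). Hence $D_{\theta}(G)$ is a union of whole components of $G'$, each of which is connected with all vertices $\theta$-essential and is therefore $\theta$-critical by Theorem \ref{Gallai_lemma}.

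I expect (iii) to be the main technical point. Let $H$ be a component of $G'$ not contained in $D_{\theta}(G)$; the previous paragraph shows $H$ contains no $\theta$-essential vertex (the type of $v \in V(H)$ is the same measured in $G'$ or in $H$, since the other components of $G'$ contribute a common matching-polynomial factor). Set $m := \textnormal{mult}(\theta, H)$ and assume $m \ge 1$ for contradiction. Theorem \ref{matching_property}(d) gives $\mu'(H,x) = \sum_{v \in V(H)} \mu(H\setminus v, x)$; the left-hand side has $\theta$-multiplicity exactly $m - 1$, whereas every summand is divisible by $(x-\theta)^m$ (no vertex of $H$ being $\theta$-essential), forcing the right-hand side to have $\theta$-multiplicity $\geq m$ --- contradiction. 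Thus $\textnormal{mult}(\theta, H) = 0$. Part (ii) then follows immediately: Theorem \ref{matching_property}(a) gives $\textnormal{mult}(\theta, G') = \sum_H \textnormal{mult}(\theta, H)$ over the components $H$ of $G'$, and by (iii) only the $\theta$-critical components contribute, each contributing $1$; comparison with $(*)$ yields the count $\textnormal{mult}(\theta, G) + |A_{\theta}(G)|$.
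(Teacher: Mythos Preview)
The paper omits the proof of this corollary, indicating only that it follows from the $\theta$-Stability Lemma (Theorem~\ref{Gallai_Edmond_vertex}) and $\theta$-Gallai's Lemma (Theorem~\ref{Gallai_lemma}); your proposal carries out exactly this intended argument and is correct. Two minor remarks: your derivative argument in (iii) is precisely the standard proof of Lemma~\ref{existence_essential} (Godsil), so you could simply cite that lemma instead; and the observation that the $\theta$-type of a vertex is the same in $G'$ as in its component $H$ is needed already in (iv) to apply Theorem~\ref{Gallai_lemma} to $H$, not only in (iii) where you state it.
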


This paper is devoted to the study of $\theta$-super positive graphs. A graph is $\theta$-super positive if $\theta$ is not a root of $\mu(G,x)$ but is a root of $\mu(G \setminus v, x)$ for every $v \in V(G)$. It is worth noting that $G$ is $0$-super positive if and only if $G$ has a perfect matching. While much is known about graphs with a perfect matching, almost nothing is known about $\theta$-super positive graphs for $\theta \not = 0$. This gives us a motivation to investigate the structure of these graphs.

The outline of this paper is as follows:

In Section 2, we show how to construct $\theta$-super positive graphs from smaller $\theta$-super positive graphs (see Theorem \ref{construction_positive}). We prove that a tree is $\theta$-super positive if and only if $\theta=0$ and it has a perfect matching (see Theorem \ref{tree_super_positive}). Consequently, a $\theta$-super positive graph must contain a cycle when $\theta\neq 0$. For a connected vertex transitive graph $G$, we prove that it is $\theta$-super positive for any root $\theta$ of $\mu (G\setminus v, x)$ where $v\in V(G)$ (see Theorem \ref{vertex_transitive_positive}). Finally we prove that if $G$ is $\theta$-super positive, then $N_{\theta}(G\setminus v)=\varnothing$ for all $v\in V(G)$ (see Theorem \ref{neutral_empty_positive}).

In Section 3, we introduce $\theta$-elementary graphs. These are $\theta$-super positive graphs with $P_{\theta}(G \setminus v) = \emptyset$ for all $v \in V(G)$. We prove a characterization of $\theta$-elementary graphs: a graph $G$ is $\theta$-elementary if and only if the set of all $\theta$-barrier sets form a partition of $V(G)$ (see Theorem \ref{elementary_characterization}).

In Section 4, we apply our results in Section 3 to prove that an $n$-cycle $C_n$ is 1-elementary if and only if $n=3k$ for some $k\in\mathbb N$ (see Theorem \ref{1_elementary_cycle}). Furthermore, we prove that $C_{3k}$ has exactly 3 1-barrier sets (see Corollary \ref{1_elementary_cycle_partition}).

In Section 5, we introduce $\theta$-base graphs which can be regarded as building blocks of $\theta$-super positive graphs. We prove a characterization of $\theta$-super positive graphs, namely a $\theta$-super positive graph can be constructed from a disjoint union of $\theta$-base graphs by adding certain type of edges; moreover, these $\theta$-base graphs are uniquely determined by $G$ (see Theorem \ref{decom_super_2} and Corollary \ref{construction_positive1}).

\section{$\theta$-super positive graphs}

\begin{dfn}\label{super_positive_dfn} A graph $G$ is $\theta$-{\em super positive} if $\theta$ is not a root of $\mu(G,x)$ and every vertex of $G$ is $\theta$-positive.
\end{dfn}

By Lemma \ref{basic_inequality}, this is equivalent to  $\textnormal {mult} (\theta, G)=0$ and $\textnormal {mult} (\theta, G\setminus v)=1$ for all $v\in V(G)$. There are a lot of $\theta$-super positive graphs. For instance the three cycle, $C_3$ and the six cycle, $C_6$ are 1-super positive. In the next theorem, we  will show how to construct $\theta$-super positive graphs from smaller $\theta$-super positive graphs.

\begin{thm}\label{construction_positive} Let $G_1$ and $G_2$ be two $\theta$-super positive graphs and $v_i\in V(G_i)$ for $i=1,2$. Let $G$ be the graph obtained by adding the edge $(v_1,v_2)$ to the union of $G_1$ and $G_2$. Then $G$ is $\theta$-super positive.
\end{thm}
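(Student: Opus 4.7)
The plan is to verify the two defining conditions of $\theta$-super positivity for $G$, namely $\m(\theta,G)=0$ and $\m(\theta,G\setminus v)=1$ for every $v\in V(G)$, by direct applications of Theorem \ref{matching_property}(a),(b) combined with the interlacing bound from Lemma \ref{basic_inequality}.

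For the first condition, I would apply the edge-deletion identity (Theorem \ref{matching_property}(b)) to the new edge $e=(v_1,v_2)$. Since $G-e=G_1\cup G_2$ and $G\setminus v_1v_2=(G_1\setminus v_1)\cup(G_2\setminus v_2)$ are disjoint unions, Theorem \ref{matching_property}(a) yields
\begin{equation}
\mu(G,x)=\mu(G_1,x)\mu(G_2,x)-\mu(G_1\setminus v_1,x)\mu(G_2\setminus v_2,x).\notag
\end{equation}
Evaluating at $x=\theta$, the second term vanishes since $\mu(G_i\setminus v_i,\theta)=0$ by hypothesis, while the first is nonzero since $\mu(G_i,\theta)\neq 0$. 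Hence $\mu(G,\theta)\neq 0$, so $\m(\theta,G)=0$.

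For the second condition, the key simplification is that Lemma \ref{basic_inequality} already forces $\m(\theta,G\setminus v)\leq \m(\theta,G)+1=1$ for every $v\in V(G)$, so it suffices to exhibit $\theta$ as a root of $\mu(G\setminus v,x)$, without worrying about simplicity. I would split this into cases. If $v=v_1$ (and symmetrically for $v=v_2$), deleting $v_1$ removes the connecting edge, so $G\setminus v_1=(G_1\setminus v_1)\cup G_2$ is a disjoint union, giving $\mu(G\setminus v_1,\theta)=\mu(G_1\setminus v_1,\theta)\mu(G_2,\theta)=0$. If instead $v\in V(G_1)\setminus\{v_1\}$, the edge $(v_1,v_2)$ still belongs to $G\setminus v$, so Theorem \ref{matching_property}(b) applied once more gives
\begin{equation}
\mu(G\setminus v,x)=\mu(G_1\setminus v,x)\mu(G_2,x)-\mu(G_1\setminus vv_1,x)\mu(G_2\setminus v_2,x),\notag
\end{equation}
and both summands vanish at $\theta$, the first because $v$ is $\theta$-positive in $G_1$ and the second because $v_2$ is $\theta$-positive in $G_2$. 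The case $v\in V(G_2)\setminus\{v_2\}$ is identical by symmetry.

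There is no real obstacle here; the whole argument reduces to two applications of the edge-deletion identity. Had one tried to prove $\m(\theta,G\setminus v)=1$ directly by factoring out $(x-\theta)$ from the displayed identity, one would face the delicate task of showing that a combination of derivatives does not vanish; the a priori interlacing bound $\m(\theta,G\setminus v)\leq 1$ bypasses this issue entirely.
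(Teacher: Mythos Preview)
Your proof is correct and follows essentially the same approach as the paper's: both apply Theorem \ref{matching_property}(b) at the bridging edge $e=(v_1,v_2)$, first to $G$ and then to $G\setminus v$, and use Theorem \ref{matching_property}(a) to factor the resulting pieces. The only cosmetic difference is that you explicitly invoke Lemma \ref{basic_inequality} to cap $\m(\theta,G\setminus v)\le 1$, whereas the paper relies on the remark following Definition \ref{super_positive_dfn} (which encodes exactly that interlacing step) and simply checks $\mu(G\setminus v,\theta)=0$.
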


\begin{proof} Let $e=(v_1,v_2)$. First we prove that $\mu (G,\theta)\neq 0$. By part (b) of Theorem \ref{matching_property}, we have $\mu (G,x)=\mu (G - e,x)-\mu ( G\setminus v_1v_2, x)$. It then follows from part (a) of Theorem \ref{matching_property} that $\mu (G,x)=\mu (G_1,x)\mu (G_2,x)-\mu (G_1\setminus v_1, x)\mu (G_2\setminus v_2, x)$. Since $G_1$ and $G_2$ are $\theta$-super positive,  $\mu (G,\theta)=\mu (G_1,\theta)\mu (G_2,\theta)\neq 0$.

It is left to prove that  $\mu (G\setminus v,\theta)=0$ for all $v\in V(G)$. Let $v\in V(G_1)$. Suppose $v=v_1$. Then by part (a) of Theorem \ref{matching_property}, $\mu (G\setminus v,x)=\mu (G_1\setminus v_1,x)\mu (G_2,x)$, and thus $\mu (G\setminus v,\theta)=0$. Suppose $v\neq v_1$. By part (b) of Theorem \ref{matching_property}, $\mu (G\setminus v,x)=\mu ((G\setminus v)- e,x)-\mu ((G\setminus v)\setminus v_1v_2, x)$. Note that $(G\setminus v)-e=(G_1\setminus v)\cup G_2$ and $(G\setminus v)\setminus v_1v_2=(G_1\setminus vv_1)\cup (G_2\setminus v_2)$. Hence $\mu (G\setminus v,\theta)=\mu (G_1\setminus v,\theta)\mu (G_2,\theta)-\mu (G_1\setminus vv_1, \theta)\mu (G_2\setminus v_2, \theta)=0$ (part (a) of Theorem \ref{matching_property}).

The case $v\in V(G_2)$ is proved similarly.
\end{proof}

The graph $G$ in Figure 1 is constructed by using Theorem \ref{construction_positive}, with $G_1=C_6$ and $G_2=C_3$. Therefore it is 1-super positive graph.

\begin{center}
\begin{pspicture}(0,0)(7,4)
\cnodeput(1, 2){1}{}
\cnodeput(2, 1){2}{}
\cnodeput(2, 3){3}{}
\cnodeput(3, 1){4}{}
\cnodeput(3, 3){5}{}
\cnodeput(4, 2){6}{}
\cnodeput(5, 2){7}{}
\cnodeput(6, 1.5){8}{}
\cnodeput(6, 2.5){9}{}
\ncline{1}{2}
\ncline{1}{3}
\ncline{2}{4}
\ncline{4}{6}
\ncline{6}{5}
\ncline{5}{3}
\ncline{6}{7}
\ncline{7}{8}
\ncline{7}{9}
\ncline{8}{9}
\rput(3.5,0){Figure 1.}
\rput(0,2){$G=$}
\end{pspicture}
\end{center}

It is clear that a $0$-super positive may or may not contain any cycle. However, we will show later that if $G$ is $\theta$-super positive and $\theta\neq 0$, then it must contain a cycle (see Corollary \ref{cycle_super_positive}). Note that any tree $T$ with at least three vertices can be represented in the following form (see Figure 2), where $u$ is a vertex with $n+1$ neighbors $v_{1}$, $\ldots$, $v_{n+1}$ such that all of them except possibly $v_{1}$ have degree $1$ and $T_1$ is a subtree of $T$ that contains $v_1$. Such a representation of $T$ is denoted by $(T_{1}, u; v_{1}, \ldots, v_{n+1})$.

\begin{center}
\begin{pspicture}(0,0)(5,5)
\cnodeput(0.5, 2){1}{}
\cnodeput(1.5, 2){2}{}
\cnodeput(2.5, 2){3}{}
\cnodeput(4.5, 2.6){4}{}
\cnodeput(2, 3){5}{}
\cnodeput(2, 4){6}{}
\ncline{1}{5}
\ncline{2}{5}
\ncline{3}{5}
\ncline{4}{5}
\ncline{5}{6}
\psline[linestyle=dashed,linearc=1]{-}(2.8,2)(3.5,2)(4.2,2.4)
\psline[linestyle=dashed]{-}(2.2,2.8)(2.8,2.1)
\psline[linestyle=dashed]{-}(2.3,2.9)(4.1,2.5)
\psline[linestyle=dashed]{-}(2.25,2.85)(3.4,2.2)
\psellipse[](0.5,1.5)(0.5,1)
\rput(0.5,1){$T_1$}
\rput(0.4,1.7){$v_1$}
\rput(1.4,1.7){$v_2$}
\rput(2.4,1.7){$v_3$}
\rput(4.8,2.3){$v_n$}
\rput(1.8,3.2){$u$}
\rput(2.7,4){$v_{n+1}$}
\rput(-1,3){$T=$}
\rput(2.5,0){Figure 2.}
\end{pspicture}
\end{center}

\begin{lm}\label{tree_root} Let $T$ be a tree with at least three vertices. Suppose $T$ has a representation $(T_{1}, u; v_{1}, \ldots, v_{n+1})$. Then $\theta$ is a root of $\mu (T,x)$ if and only if
\begin {equation}
(n-\theta^2)\theta^{n-1} \mu (T_1,\theta)+\theta^n\mu (T_1\setminus v_1,\theta)=0.\notag
\end {equation}
\end{lm}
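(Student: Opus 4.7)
The plan is to expand $\mu(T,x)$ about the vertex $u$ using part (c) of Theorem \ref{matching_property}, and then exploit the fact that the neighbors $v_2,\ldots,v_{n+1}$ of $u$ are leaves so that deleting $u$ (or $u$ together with one of these leaves, or $u$ together with $v_1$) produces a graph that is a disjoint union of either $T_1$ or $T_1\setminus v_1$ with a number of isolated vertices. Once the relevant graphs are identified, the matching polynomial of each summand factors cleanly thanks to part (a) of Theorem \ref{matching_property}, and each isolated vertex contributes a factor of $x$.

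More concretely, first I would apply part (c) of Theorem \ref{matching_property} at $u$:
\begin{equation}
\mu(T,x)=x\,\mu(T\setminus u,x)-\sum_{i=1}^{n+1}\mu(T\setminus uv_i,x). \notag
\end{equation}
Then I would identify the three types of summands that appear. Since $v_2,\ldots,v_{n+1}$ all have degree $1$ in $T$, we have $T\setminus u = T_1\,\cup\,\{v_2,\ldots,v_{n+1}\}$ with the latter being $n$ isolated vertices, so $\mu(T\setminus u,x)=x^n\mu(T_1,x)$. Similarly $T\setminus uv_1=(T_1\setminus v_1)\cup\{v_2,\ldots,v_{n+1}\}$, giving $\mu(T\setminus uv_1,x)=x^n\mu(T_1\setminus v_1,x)$. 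For each $i\geq 2$, $T\setminus uv_i$ consists of $T_1$ together with $n-1$ isolated vertices, so $\mu(T\setminus uv_i,x)=x^{n-1}\mu(T_1,x)$, and these $n$ contributions collapse to $nx^{n-1}\mu(T_1,x)$.

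Substituting and collecting, I obtain
\begin{equation}
\mu(T,x)=x^{n+1}\mu(T_1,x)-x^n\mu(T_1\setminus v_1,x)-nx^{n-1}\mu(T_1,x)
=(x^2-n)x^{n-1}\mu(T_1,x)-x^n\mu(T_1\setminus v_1,x). \notag
\end{equation}
Plugging in $x=\theta$ shows that $\mu(T,\theta)=0$ if and only if $(n-\theta^2)\theta^{n-1}\mu(T_1,\theta)+\theta^n\mu(T_1\setminus v_1,\theta)=0$, which is the desired identity.

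There is really no conceptual obstacle here; the only thing to be careful about is the bookkeeping — namely making sure the case $n=0$ (where $u$ has only the neighbor $v_1$ and the sum over $i\geq 2$ is empty) is handled correctly, and that the factorization of matching polynomials over disjoint unions is used with the right number of isolated-vertex $x$-factors.
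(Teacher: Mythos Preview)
Your proof is correct and follows exactly the same approach as the paper: expand $\mu(T,x)$ at the vertex $u$ via part (c) of Theorem \ref{matching_property}, then factor each resulting term using part (a) since the deleted graphs are disjoint unions of $T_1$ or $T_1\setminus v_1$ with isolated vertices. Your write-up is in fact more explicit than the paper's, which simply asserts the final identity after citing parts (a) and (c).
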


\begin{proof} By part (c) of Theorem \ref{matching_property}, $\mu (T,\theta)=\theta\mu (T\setminus u,\theta)-\sum_{i=1}^{n+1}\mu (T\setminus uv_i,\theta)$ (see Figure 2), which implies (using part (a) of Theorem \ref{matching_property}),
\begin {equation}
\mu (T,\theta)=(\theta^2-n)\theta^{n-1}\mu (T_1,\theta)-\theta^{n}\mu (T_1\setminus v,\theta).\notag
\end {equation}
Hence the lemma holds
\end{proof}

\begin{thm}\label{tree_super_positive} Let $T$ be a tree. Then $T$ is $\theta$-super positive  if and only if $\theta=0$ and it has a perfect matching.
\end{thm}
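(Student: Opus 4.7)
The plan is as follows. For the ``if'' direction, the statement is immediate since a graph is $0$-super positive exactly when it has a perfect matching; if $T$ has a perfect matching then $|V(T)|$ is even, so $T\setminus v$ has odd order and hence no perfect matching, giving $\mu(T\setminus v,0)=0$ for every $v$.

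For the ``only if'' direction, I would first dispose of the small cases: when $|V(T)|=1$ no graph is $\theta$-super positive (since $\mu(T\setminus v,\theta)=1\neq 0$), and when $|V(T)|=2$ the condition $\mu(T\setminus v,\theta)=\theta=0$ already forces $\theta=0$. Now assume $|V(T)|\geq 3$ and, for contradiction, that $T$ is $\theta$-super positive with $\theta\neq 0$. Fix a representation $(T_1,u;v_1,\dots,v_{n+1})$ with $n\geq 1$; such a representation exists because taking $u$ to be the neighbor of a leaf along a longest path of $T$ gives a vertex of degree at least $2$ whose non-leaf neighbors are forced by the maximality of the path to number at most one.

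The core of the argument is to extract vanishing values of $\mu(T_1,\theta)$ and $\mu(T_1\setminus v_1,\theta)$ from the super-positivity of $T$ at two carefully chosen vertices. Since $T\setminus u$ equals $T_1$ together with the $n$ isolated vertices $v_2,\dots,v_{n+1}$, part (a) of Theorem \ref{matching_property} gives $\mu(T\setminus u,\theta)=\theta^n\mu(T_1,\theta)$; combining with $\mu(T\setminus u,\theta)=0$ and $\theta\neq 0$ forces $\mu(T_1,\theta)=0$. Next, applying the same recurrence that proves Lemma \ref{tree_root} to $T\setminus v_{n+1}$ (in which $u$ now has the $n$ neighbors $v_1,\dots,v_n$) expresses $\mu(T\setminus v_{n+1},\theta)$ as a linear combination of $\mu(T_1,\theta)$ and $\mu(T_1\setminus v_1,\theta)$; substituting $\mu(T_1,\theta)=0$ collapses the equation $\mu(T\setminus v_{n+1},\theta)=0$ to a nonzero scalar multiple of $\mu(T_1\setminus v_1,\theta)$, forcing $\mu(T_1\setminus v_1,\theta)=0$ as well. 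Finally, plugging both zeros into the identity $\mu(T,\theta)=(\theta^2-n)\theta^{n-1}\mu(T_1,\theta)-\theta^n\mu(T_1\setminus v_1,\theta)$ supplied by Lemma \ref{tree_root} yields $\mu(T,\theta)=0$, contradicting $\theta$-super positivity. We conclude $\theta=0$, and then $T$ being $0$-super positive is equivalent to $T$ having a perfect matching.

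The only bookkeeping nuisance is the boundary case $n=1$, where in $T\setminus v_2$ the vertex $u$ becomes a leaf and the symbolic formula has an awkward $\theta^{n-2}=\theta^{-1}$. This is cosmetic: applying part (c) of Theorem \ref{matching_property} directly at $u$ in $T\setminus v_2$ yields $\mu(T\setminus v_2,x)=x\mu(T_1,x)-\mu(T_1\setminus v_1,x)$, from which the same conclusion $\mu(T_1\setminus v_1,\theta)=0$ follows.
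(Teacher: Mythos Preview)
Your proof is correct and follows essentially the same approach as the paper's: both derive $\mu(T_1,\theta)=0$ from $\mu(T\setminus u,\theta)=0$, then $\mu(T_1\setminus v_1,\theta)=0$ from $\mu(T\setminus v_{n+1},\theta)=0$, and reach a contradiction via Lemma~\ref{tree_root}. The only differences are cosmetic: you run the contradiction in the order ``both vanish $\Rightarrow$ $\mu(T,\theta)=0$'' whereas the paper first infers $\mu(T_1\setminus v_1,\theta)\neq 0$ from $\mu(T,\theta)\neq 0$ and then contradicts it; and you add explicit treatment of the $|V(T)|\le 2$ and $n=1$ boundary cases that the paper leaves implicit (in the paper's formula the $n=1$ term carries the coefficient $n-1=0$, so no separate handling is strictly needed).
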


\begin{proof} Suppose $T$ is $\theta$-super positive and $\theta\neq 0$. Then $T$ must have at least three vertices. By Lemma \ref{tree_root},
\begin {equation}
(n-\theta^2)\theta^{n-1} \mu (T_1,\theta)+\theta^n\mu (T_1\setminus v_1,\theta)\neq 0.\notag
\end {equation}
By part (a) of Theorem \ref{matching_property}, $0=\mu (T\setminus u,\theta)=\theta^{n}\mu (T_1,\theta)$ (see Figure 2). Therefore $\mu (T_1,\theta)=0$ and $ \mu (T_1\setminus v_1,\theta)\neq 0$.  Now $\mu (T\setminus v_{n+1},\theta)=0$. By part (c) of  of Theorem \ref{matching_property}, $\mu (T\setminus v_{n+1},\theta)=\theta\mu (T\setminus uv_{n+1},\theta)-\sum_{i=1}^n \mu (T\setminus uv_iv_{n+1},\theta)=\theta^{n}\mu (T_1,\theta)-(n-1)\theta^{n-2}\mu (T_1,\theta)-\theta^{n-1}\mu (T_1\setminus v_1,\theta)$. This implies that $\mu (T_1\setminus v_1,\theta)=0$, a contradiction. Hence $\theta=0$. Since 0 is not a root of $\mu (T,x)$, $T$ must have a perfect matching.

The converse is obvious.
\end{proof}

A consequence of Theorem \ref{tree_super_positive} is the following corollary.
\begin{cor}\label{cycle_super_positive} If $G$ is $\theta$-super positive for some $\theta\neq 0$, then $G$ must contain a cycle.
\end{cor}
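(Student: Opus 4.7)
The plan is to prove the corollary by contradiction, reducing the general case to Theorem \ref{tree_super_positive} via the multiplicativity of the matching polynomial over disjoint unions. Suppose for contradiction that $G$ is $\theta$-super positive for some $\theta \neq 0$ and that $G$ contains no cycle. Then $G$ is a forest, so I can write $G = T_1 \cup \cdots \cup T_k$ where each $T_j$ is a tree.

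The key step is to show that each component $T_j$ must itself be $\theta$-super positive, so that Theorem \ref{tree_super_positive} delivers the contradiction $\theta = 0$. By part (a) of Theorem \ref{matching_property}, $\mu(G,x) = \prod_{i=1}^{k} \mu(T_i,x)$, and since $\mu(G,\theta) \neq 0$, every factor $\mu(T_i,\theta)$ is nonzero. Next, for any $v \in V(T_j)$, again by part (a) of Theorem \ref{matching_property},
\begin{equation}
\mu(G \setminus v, x) = \mu(T_j \setminus v, x) \prod_{i \neq j} \mu(T_i, x).\notag
\end{equation}
Since $G$ is $\theta$-super positive, $\mu(G \setminus v, \theta) = 0$, and since $\mu(T_i, \theta) \neq 0$ for $i \neq j$, we must have $\mu(T_j \setminus v, \theta) = 0$. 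This holds for every $v \in V(T_j)$, so $T_j$ is $\theta$-super positive.

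Applying Theorem \ref{tree_super_positive} to $T_j$ forces $\theta = 0$, contradicting our assumption that $\theta \neq 0$. Therefore $G$ must contain a cycle. There is no real obstacle here; the only subtlety is handling the case where $G$ is disconnected, which is dispatched cleanly by the multiplicativity in part (a) of Theorem \ref{matching_property}.
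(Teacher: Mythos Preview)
Your proof is correct and follows the same approach the paper intends: the corollary is presented there simply as an immediate consequence of Theorem \ref{tree_super_positive}, with no explicit argument given. You have supplied the one detail the paper leaves implicit, namely the reduction from a forest to its tree components via part (a) of Theorem \ref{matching_property}, which is exactly what is needed since $\theta$-super positive graphs need not be connected.
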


We shall need the following lemmas.

\begin{lm}\label{heilmann_lieb}\textnormal {\cite[Theorem 6.3]{HL}(Heilmann-Lieb Identity)} Let $u,v\in V(G)$. Then
\begin{equation}
\mu(G\setminus u, x)\mu(G\setminus v,x)-\mu(G,x)\mu(G\setminus uv)=\sum_{p\in \mathcal P(u,v)} \mu(G\setminus p,x)^2,\notag
\end{equation}
where $\mathcal P(u,v)$ is the set of all the paths from $u$ to $v$ in $G$.
\end{lm}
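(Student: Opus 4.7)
The plan is to prove the identity by a sign-reversing involution on pairs of matchings, the standard combinatorial strategy for Heilmann-Lieb-type identities. Expanding each matching polynomial on the left-hand side over matchings, I would rewrite the expression as a signed sum over ordered pairs $(M_1, M_2)$ of matchings of $G$: the first product $\mu(G\setminus u, x)\mu(G\setminus v, x)$ contributes pairs satisfying $u \notin V(M_1)$ and $v \notin V(M_2)$, each with weight $+(-1)^{|M_1|+|M_2|}\, x^{2|V(G)|-2-2|M_1|-2|M_2|}$; the second product $\mu(G, x)\mu(G\setminus uv, x)$ contributes pairs satisfying $u \notin V(M_2)$ and $v \notin V(M_2)$ with the opposite overall sign. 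Pairs lying in both classes---those with $u \notin V(M_1) \cup V(M_2)$ and $v \notin V(M_2)$---appear in both expansions with opposite signs and cancel directly.

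For each surviving pair, one checks that $u$ is covered by exactly one of $M_1, M_2$: in the $+$-class-only case, $u \notin V(M_1)$ by class definition, while $u \in V(M_2)$ since otherwise (together with $v \notin V(M_2)$) the pair would also lie in the $-$-class; symmetrically in the other case. Writing $D = M_1 \triangle M_2$, this means $u$ has degree exactly one in $D$ and is therefore the endpoint of some path-component $C_u$ of $D$. I would define an involution $\Phi$ on survivors by sending $(M_1, M_2)$ to $(M_1 \triangle E(C_u),\, M_2 \triangle E(C_u))$. Swapping along $C_u$ toggles which matching covers $u$; a case analysis confirms that $\Phi$ is involutive, preserves the monomial weight, and interchanges the two classes whenever $C_u$ is not itself a path from $u$ to $v$---the key point being that in that case $v \notin V(C_u)$, so $v$'s status relative to $M_2$ is preserved by the swap. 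Hence all such survivors cancel in pairs.

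The pairs fixed by $\Phi$ are exactly those whose symmetric difference contains a (necessarily unique) path $p$ from $u$ to $v$, and they group naturally by $p \in \mathcal{P}(u, v)$. For each fixed $p$ of length $k$, exactly one of the two alternating edge-assignments of $p$ to $(M_1, M_2)$ yields a valid pair---even $k$ yields one in the $+$-class, odd $k$ one in the $-$-class---and the remaining data is an arbitrary pair of matchings on $G \setminus p$. A direct sign computation, in which the $\pm$ class-sign cancels the parity factor $(-1)^k$ from the edges of $p$, collapses the contribution for fixed $p$ to $\mu(G\setminus p, x)^2$; summing over $p$ yields the right-hand side. The main obstacle will be the sub-case bookkeeping for $\Phi$---verifying that swapping along $C_u$ keeps the survivor within the ``exactly one class'' space in every configuration of where $v$ lies relative to $C_u$---though each sub-case is routine.
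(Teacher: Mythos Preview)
The paper does not prove this lemma at all: it is quoted verbatim as a known result from Heilmann and Lieb (their Theorem~6.3) and used as a black box in the proof of Theorem~\ref{neutral_empty_positive}. So there is no ``paper's own proof'' to compare against.

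Your proof sketch is correct in substance and supplies a self-contained combinatorial argument where the paper offers none. A couple of minor points of phrasing. First, the pairs with $C_u$ equal to a $u$--$v$ path are not literally \emph{fixed} by $\Phi$; applying $\Phi$ to such a pair produces $(M_1',M_2')$ with $v\in V(M_2')$, which lies in neither survivor class. The precise statement is that $\Phi$, restricted to the survivors for which $C_u$ is \emph{not} a $u$--$v$ path, is a weight-preserving involution on that subset which interchanges the $+$- and $-$-classes; hence those survivors cancel, and what remains is exactly the set you then enumerate. Second, you should note (and you implicitly use) that if $v\in V(C_u)$ then $v$ cannot be an interior vertex of $C_u$: an interior vertex has degree~$2$ in $M_1\triangle M_2$ and is therefore covered by $M_2$, contradicting $v\notin V(M_2)$ in both survivor classes. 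This is what makes the dichotomy ``$v\notin V(C_u)$'' versus ``$C_u$ is a $u$--$v$ path'' exhaustive. With these clarifications the involution argument goes through, and your parity/sign computation for the residual contribution (even $k$ from the $+$-class, odd $k$ from the $-$-class, class sign cancelling $(-1)^k$) is correct and yields $\mu(G\setminus p,x)^2$ for each $p$.
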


\begin{lm}\label{existence_essential}\textnormal {\cite[Lemma 3.1]{G}} Suppose $\m(\theta, G)>0$. Then $G$ contains at least one $\theta$-essential vertex.
\end{lm}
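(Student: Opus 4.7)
My plan is to combine the differentiation identity in part (d) of Theorem \ref{matching_property} with the interlacing bound of Lemma \ref{basic_inequality}. The key observation is that if every vertex of $G$ failed to be $\theta$-essential, then interlacing would force $(x-\theta)^k$ to divide each $\mu(G\setminus i,x)$, hence also their sum $\mu'(G,x)$; but a direct factoring argument shows $\mu'(G,x)$ vanishes at $\theta$ to order exactly $k-1$, where $k=\textnormal{mult}(\theta,G)$, yielding a contradiction.

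First I would set $k:=\textnormal{mult}(\theta,G)>0$ and write $\mu(G,x)=(x-\theta)^{k}f(x)$ with $f(\theta)\neq 0$. Differentiating gives
\[
\mu'(G,x)=(x-\theta)^{k-1}\bigl[kf(x)+(x-\theta)f'(x)\bigr],
\]
and the bracket evaluated at $x=\theta$ equals $kf(\theta)\neq 0$, so $\mu'(G,x)$ has $\theta$ as a root of multiplicity exactly $k-1$. Next I would suppose for contradiction that no vertex of $G$ is $\theta$-essential. By Lemma \ref{basic_inequality}, every $i\in V(G)$ then satisfies $\textnormal{mult}(\theta,G\setminus i)\geq k$, so $(x-\theta)^{k}$ divides each $\mu(G\setminus i,x)$. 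Summing over all vertices and invoking part (d) of Theorem \ref{matching_property} shows that $(x-\theta)^{k}$ divides $\mu'(G,x)$, contradicting the fact that its multiplicity at $\theta$ is exactly $k-1$. Therefore at least one vertex must be $\theta$-essential.

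I do not foresee any real obstacle. The only subtlety is justifying that the derivative has multiplicity \emph{exactly} $k-1$ rather than merely at least $k-1$, and this relies precisely on the hypothesis $k>0$, which ensures $kf(\theta)\neq 0$ in the factorisation above. Everything else is a mechanical consequence of the interlacing inequality and the linearity of divisibility by $(x-\theta)^k$ under addition.
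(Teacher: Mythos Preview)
Your argument is correct. Note, however, that the paper does not supply its own proof of this lemma: it is quoted without proof from Godsil \cite[Lemma 3.1]{G}. The derivative argument you give---factoring $\mu(G,x)=(x-\theta)^k f(x)$ to see that $\theta$ has multiplicity exactly $k-1$ in $\mu'(G,x)$, and then using part (d) of Theorem \ref{matching_property} together with interlacing to force a contradiction if every $\mu(G\setminus i,x)$ were divisible by $(x-\theta)^k$---is in fact the standard proof, and is essentially the one Godsil gives. There is nothing to compare beyond that.
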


\begin{thm}\label{vertex_transitive_positive} Let $G$ be connected, vertex transitive and $z\in V(G)$. If $\theta$ is a root of $\mu (G\setminus z,x)$ then $G$ is $\theta$-super positive.
\end{thm}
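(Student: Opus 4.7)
The plan is to verify the two defining conditions of $\theta$-super positivity separately, leveraging vertex transitivity to propagate information uniformly across all vertices and using $\theta$-Gallai's Lemma (Theorem \ref{Gallai_lemma}) to rule out the bad case.

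First I would handle the easy half. Vertex transitivity of $G$ provides, for every $v\in V(G)$, an automorphism $\sigma$ of $G$ sending $z$ to $v$; this automorphism induces a graph isomorphism $G\setminus z\cong G\setminus v$, hence $\mu(G\setminus v,x)=\mu(G\setminus z,x)$. Since $\theta$ is a root of $\mu(G\setminus z,x)$ by hypothesis, it follows that $\mu(G\setminus v,\theta)=0$ and in fact $\textnormal{mult}(\theta,G\setminus v)=\textnormal{mult}(\theta,G\setminus z)\geq 1$ for every $v\in V(G)$.

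Next I would show $\mu(G,\theta)\neq 0$ by contradiction. Assume $\textnormal{mult}(\theta,G)\geq 1$. By Lemma \ref{existence_essential}, $G$ contains at least one $\theta$-essential vertex $u$. Applying vertex transitivity again: for any $v\in V(G)$, fix an automorphism $\sigma$ with $\sigma(u)=v$; then $\textnormal{mult}(\theta,G\setminus v)=\textnormal{mult}(\theta,G\setminus u)=\textnormal{mult}(\theta,G)-1$, so every vertex of $G$ is $\theta$-essential. Now $G$ is connected by hypothesis, so $\theta$-Gallai's Lemma forces $\textnormal{mult}(\theta,G)=1$, which in turn gives $\textnormal{mult}(\theta,G\setminus v)=0$. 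This contradicts the first paragraph, so $\textnormal{mult}(\theta,G)=0$.

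Finally I would combine the two points to finish. From $\textnormal{mult}(\theta,G)=0$ and the interlacing inequality in Lemma \ref{basic_inequality}, $\textnormal{mult}(\theta,G\setminus v)\in\{0,1\}$; since we already know it is $\geq 1$, we conclude $\textnormal{mult}(\theta,G\setminus v)=1$ for all $v$, so every vertex of $G$ is $\theta$-positive and $G$ is $\theta$-super positive by Definition \ref{super_positive_dfn}. The only non-routine step is the middle paragraph — the real content is that $\theta$-Gallai's Lemma is exactly the tool needed to convert ``all vertices look the same'' (from vertex transitivity, combined with the existence of one $\theta$-essential vertex) into the sharp identity $\textnormal{mult}(\theta,G)=1$, whose failure mode after deletion contradicts the hypothesis on $z$.
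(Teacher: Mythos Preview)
Your proof is correct and follows essentially the same approach as the paper's: both use vertex transitivity to show $\mu(G\setminus v,\theta)=0$ for all $v$, then argue by contradiction using Lemma~\ref{existence_essential} and $\theta$-Gallai's Lemma (Theorem~\ref{Gallai_lemma}) to force $\textnormal{mult}(\theta,G)=0$. Your final paragraph invoking interlacing to pin down $\textnormal{mult}(\theta,G\setminus v)=1$ is a detail the paper leaves implicit, but otherwise the arguments are identical.
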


\begin{proof} Since $G\setminus z$ is isomorphic to $G\setminus y$ for all $y\in V(G)$, $\mu (G\setminus z,x)=\mu (G\setminus y,x)$ for all $y\in V(G)$. So  $\textnormal {mult} (\theta, G\setminus z)=\textnormal {mult} (\theta, G\setminus y)$. This implies that $\theta$ is a root of $\mu (G\setminus y,x)$ for all $y$.

Now it remains to show that $\mu (G,\theta)\neq 0$. Suppose the contrary. Then by Lemma \ref{existence_essential}, $G$ has at least one $\theta$-essential vertex. Since $G$ is vertex transitive, all vertices in $G$ are $\theta$-essential. By  Theorem \ref{Gallai_lemma}, $\textnormal {mult} (\theta, G)=1$. But then  $\textnormal {mult} (\theta, G\setminus z)=0$, a contradiction. Hence $\mu (G,\theta)\neq 0$ and $G$ is $\theta$-super positive.
\end{proof}

However, a $\theta$-super positive graph is not necessarily vertex transitive (see Figure 1). Furthermore a $\theta$-super positive graph is not necessary connected, for the union of two $C_3$ is $1$-super positive.

\begin{thm}\label{neutral_empty_positive} Let $G$ be $\theta$-super positive. Then $N_{\theta}(G\setminus v)=\varnothing$ for all $v\in V(G)$.
\end{thm}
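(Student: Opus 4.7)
My plan is to exploit the Heilmann--Lieb identity (Lemma \ref{heilmann_lieb}) together with the elementary fact that a polynomial which is nonnegative on $\mathbb{R}$ must have each of its real roots with even multiplicity. This will give a parity obstruction that forbids any vertex of $G\setminus v$ from being $\theta$-neutral.

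Fix $v\in V(G)$ and an arbitrary $u\in V(G\setminus v)$; in particular $u\ne v$. Since $G$ is $\theta$-super positive, we have $\mu(G,\theta)\ne 0$ and $\textnormal{mult}(\theta,G\setminus u)=\textnormal{mult}(\theta,G\setminus v)=1$. I want to show that $u$ is not $\theta$-neutral in $G\setminus v$, that is, $\textnormal{mult}(\theta,G\setminus uv)\ne \textnormal{mult}(\theta,G\setminus v)=1$. Suppose, toward a contradiction, that $\textnormal{mult}(\theta,G\setminus uv)=1$. Then in the Heilmann--Lieb identity
\[
\mu(G\setminus u,x)\mu(G\setminus v,x)-\mu(G,x)\mu(G\setminus uv,x)=\sum_{p\in\mathcal P(u,v)}\mu(G\setminus p,x)^2,
\]
the first product on the left has $\theta$ as a root of multiplicity $1+1=2$, while the second product has $\theta$ as a root of multiplicity $0+1=1$. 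Since these multiplicities differ, the left-hand side has $\theta$ as a root of multiplicity exactly $1$.

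On the other hand, the right-hand side is a finite sum of squares of real polynomials, hence is nonnegative on all of $\mathbb{R}$. A standard argument (expanding around $x=\theta$ and extracting the minimal factor $(x-\theta)^{2m}$, where $2m$ is twice the minimum order of vanishing among the summands) shows that every real root of such a polynomial has even multiplicity; otherwise the polynomial would change sign there. This contradicts the left-hand side having odd multiplicity $1$ at $\theta$. Hence $\textnormal{mult}(\theta,G\setminus uv)\ne 1$, so $u$ is not $\theta$-neutral in $G\setminus v$, and since $u$ was arbitrary we conclude $N_\theta(G\setminus v)=\varnothing$.

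I do not foresee a real obstacle here: the proof is a two-line calculation once the Heilmann--Lieb identity is combined with the sum-of-squares parity observation. The only step that merits an explicit justification is the claim that the right-hand side has even multiplicity at $\theta$, but this follows from nonnegativity on $\mathbb{R}$ and is entirely routine.
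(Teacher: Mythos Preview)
Your proof is correct and follows essentially the same approach as the paper: apply the Heilmann--Lieb identity and exploit the parity of the multiplicity of $\theta$ on the sum-of-squares side to contradict the odd multiplicity forced on the left by the assumption that $u$ is $\theta$-neutral in $G\setminus v$. The only cosmetic difference is that you observe directly that the left-hand side has multiplicity exactly $1$, whereas the paper argues it is at least $1$, then at least $2$ by parity, and then derives the contradiction from the second product; both routes are equivalent.
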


\begin{proof} Suppose $N_{\theta}(G\setminus v)\neq\varnothing$ for some $v\in V(G)$. Let $u\in N_{\theta}(G\setminus v)$. By Lemma \ref{heilmann_lieb},
\begin{equation}
\mu(G\setminus u, x)\mu(G\setminus v,x)-\mu(G,x)\mu(G\setminus uv)=\sum_{p\in \mathcal P(u,v)} \mu(G\setminus p,x)^2.\notag
\end{equation}
Note that  the multiplicity of $\theta$ as a root of $\mu(G\setminus u,x)\mu(G\setminus v,x)$ is $2$, while the multiplicity of $\theta$ as a root of $\mu(G,x)\mu(G\setminus vu,x)$ is $1$ since $u$ is $\theta$-neutral in $G \setminus v$. Therefore the multiplicity of $\theta$ as a root of the polynomial on the left-hand side of the equation is at least $1$. But the multiplicity of $\theta$ as a root of the polynomial on the right-hand side of the equation is even and so, in comparison with the left-hand side, it must be at least $2$. This forces the multiplicity of $\theta$ as a root of $\mu(G,x)\mu(G\setminus vu,x)$ to be at least $2$, a contradiction. Hence $N_{\theta}(G\setminus v)=\varnothing$ for all $v\in V(G)$.
\end{proof}

Now we know that for a $\theta$-super positive graph $G$, $N_{\theta}(G\setminus v)=\varnothing$ for all $v\in V(G)$. So it is quite natural to ask whether $P_{\theta}(G\setminus v)=\varnothing$ for all $v\in V(G)$. Well, this is not true in general (see Figure 1). This motivates us to study  the $\theta$-super positive graph $G$, for which $P_{\theta}(G\setminus v)=\varnothing$ for all $v\in V(G)$. We proceed to do this in the next section.

\section{$\theta$-elementary graphs}

\begin{dfn}\label{ext_elementary_def} A graph  $G$ is said to be $\theta$-\emph{elementary} if it is $\theta$-super positive and $P_{\theta}(G\setminus v)=\varnothing$ for all $v\in V(G)$.
\end{dfn}

The graph $G$ in Figure 3 is 1-elementary. Not every $\theta$-positive graph is $\theta$-elementary. For instance, the graph in Figure 1 is not 1-elementary.

\begin{center}
\begin{pspicture}(0,0)(6,5)
\cnodeput(2, 4){1}{}
\cnodeput(5, 4){2}{}
\cnodeput(3, 3){3}{}
\cnodeput(4, 3){4}{}
\cnodeput(2, 2){5}{}
\cnodeput(5, 2){6}{}
\ncline{1}{3}
\ncline{1}{4}
\ncline{1}{5}
\ncline{2}{3}
\ncline{2}{4}
\ncline{2}{6}
\ncline{3}{4}
\ncline{3}{5}
\ncline{4}{6}
\rput(3.5,1){Figure 3.}
\rput(1.2,3){$G=$}
\rput(2.3,4.2){$u_1$}
\rput(4.7,4.2){$u_2$}
\rput(3.1,2.7){$u_3$}
\rput(4,2.7){$u_4$}
\rput(2.4,1.8){$u_5$}
\rput(4.7,1.8){$u_6$}
\end{pspicture}
\end{center}

\begin{thm}\label{elementary_characterization1} A graph $G$ is $\theta$-elementary if and only if $\textnormal {mult} (\theta, G)=0$ and $P_{\theta}(G\setminus v)\cup N_{\theta}(G\setminus v)=\varnothing$ for all $v\in V(G)$.
\end{thm}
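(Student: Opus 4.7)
The plan is to prove the two directions separately, leveraging Theorem \ref{neutral_empty_positive} in the forward direction and the interlacing lemma together with the canonical decomposition of $V(H)$ when $\textnormal{mult}(\theta,H)=0$ in the backward direction.

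For the forward direction, suppose $G$ is $\theta$-elementary. By Definition \ref{ext_elementary_def}, $G$ is $\theta$-super positive and $P_{\theta}(G\setminus v)=\varnothing$ for every $v\in V(G)$. Super-positivity immediately gives $\textnormal{mult}(\theta,G)=0$ by definition. Applying Theorem \ref{neutral_empty_positive} to the super-positive graph $G$ yields $N_{\theta}(G\setminus v)=\varnothing$ for all $v$ as well. Combining, $P_{\theta}(G\setminus v)\cup N_{\theta}(G\setminus v)=\varnothing$ for every $v\in V(G)$.

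For the backward direction, assume $\textnormal{mult}(\theta,G)=0$ and $P_{\theta}(G\setminus v)\cup N_{\theta}(G\setminus v)=\varnothing$ for every $v\in V(G)$. The condition $P_{\theta}(G\setminus v)=\varnothing$ is already in hand; what remains is to verify that $G$ is $\theta$-super positive, i.e., $\textnormal{mult}(\theta,G\setminus v)=1$ for every $v$. Fix $v$. By Lemma \ref{basic_inequality} together with $\textnormal{mult}(\theta,G)=0$, we have $\textnormal{mult}(\theta,G\setminus v)\in\{0,1\}$. Suppose, for contradiction, that $\textnormal{mult}(\theta,G\setminus v)=0$. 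Then, as observed in the discussion after Definition \ref{P:D3}, $G\setminus v$ contains no $\theta$-essential vertex (otherwise deleting it would drive the multiplicity negative) and hence no $\theta$-special vertex (those must be adjacent to essential ones); consequently
\begin{equation}
V(G\setminus v)=P_{\theta}(G\setminus v)\cup N_{\theta}(G\setminus v),\notag
\end{equation}
which is empty by hypothesis. This forces $|V(G)|=1$; ruling out this degenerate case, we obtain a contradiction, so $\textnormal{mult}(\theta,G\setminus v)=1$, completing the argument.

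The only real obstacle is pinning down that the hypothesis, coupled with $\textnormal{mult}(\theta,G\setminus v)=0$, collapses $V(G\setminus v)$ entirely into $P_{\theta}(G\setminus v)\cup N_{\theta}(G\setminus v)$; the decomposition machinery already set up after Definition \ref{P:D3} makes this step short. The lone pathological edge case $G=K_1$ with $\theta\neq 0$ satisfies the right-hand side vacuously without being $\theta$-elementary, but is presumably intended to be excluded by context.
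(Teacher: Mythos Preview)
Your proof is correct and follows essentially the same approach as the paper: the forward direction via Theorem~\ref{neutral_empty_positive}, and the backward direction by observing that if $\textnormal{mult}(\theta,G\setminus v)=0$ then $V(G\setminus v)$ decomposes entirely into $P_{\theta}(G\setminus v)\cup N_{\theta}(G\setminus v)$, contradicting the hypothesis. You are in fact slightly more careful than the paper in flagging the degenerate case $G=K_{1}$ with $\theta\neq 0$, which the paper's argument silently passes over.
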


\begin{proof} Suppose $\textnormal {mult} (\theta, G)=0$ and $P_{\theta}(G\setminus v)\cup N_{\theta}(G\setminus v)=\varnothing$ for all $v\in V(G)$. Then for each $v\in V(G)$, $\textnormal {mult} (\theta, G\setminus v)=1$, for otherwise $G \setminus v$ would only consist of $\theta$-neutral and $\theta$-positive vertices whence $P_{\theta}(G\setminus v)\cup N_{\theta}(G\setminus v)\neq \varnothing$. Therefore $G$ is $\theta$-super positive and it is $\theta$-elementary.

The other implication follows from Theorem \ref{neutral_empty_positive}.
\end{proof}

It turns out that the notion of a  $0$-elementary graph coincide with the classical notion of an elementary graph. Properties of elementary graphs can be found in Section 5.1 on p. 145 of \cite{Lo}.

The number of $\theta$-critical components in $G$ is denoted by $c_{\theta} (G)$.

\begin {dfn}\label{barrier_def}  A $\theta$-\emph {barrier set} is defined to be a set $X\subseteq V(G)$ for which $ \textnormal {mult} (\theta,G)=c_{\theta} (G\setminus X)-\vert X\vert$.

A $\theta$-\emph {extreme set} is defined to be a set $X\subseteq V(G)$ for which  $\textnormal {mult} (\theta,G\setminus X)=\textnormal {mult} (\theta,G)+\vert X\vert$.
\end {dfn}

$\theta$-barrier sets and $\theta$-extreme sets can be regarded as $\theta$-analogue of Tutte sets and extreme sets in classical matching theory. Properties of $\theta$-barrier sets and $\theta$-extreme sets have been studied by Ku and Wong \cite {KW}. In particular, the following results are needed.

\begin{lm}\label{subset_extreme}\textnormal{\cite[Lemma 2.5]{KW}} A subset of a $\theta$-extreme set is a $\theta$-extreme set.
\end{lm}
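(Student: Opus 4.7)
The plan is to squeeze $\mathrm{mult}(\theta, G\setminus Y)$ between two bounds that force equality, using only the interlacing lemma (Lemma \ref{basic_inequality}) and the extremality hypothesis on $X$.

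First, I would observe the iterated form of interlacing: for any subset $Z \subseteq V(G)$, repeated application of $\mathrm{mult}(\theta, H \setminus u) \le \mathrm{mult}(\theta, H) + 1$ gives
\begin{equation}
\mathrm{mult}(\theta, G \setminus Z) \le \mathrm{mult}(\theta, G) + |Z|. \notag
\end{equation}
Applying this with $Z = Y$ yields the upper bound $\mathrm{mult}(\theta, G\setminus Y) \le \mathrm{mult}(\theta, G) + |Y|$.

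For the matching lower bound, I would write $G \setminus X = (G \setminus Y) \setminus (X \setminus Y)$ and apply the same iterated interlacing to $G \setminus Y$ with the deletion set $X \setminus Y$, obtaining
\begin{equation}
\mathrm{mult}(\theta, G \setminus X) \le \mathrm{mult}(\theta, G \setminus Y) + |X| - |Y|. \notag
\end{equation}
Since $X$ is $\theta$-extreme, the left-hand side equals $\mathrm{mult}(\theta, G) + |X|$. Rearranging gives $\mathrm{mult}(\theta, G \setminus Y) \ge \mathrm{mult}(\theta, G) + |Y|$, and combined with the upper bound this is the desired equality, proving $Y$ is $\theta$-extreme.

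There is no real obstacle here; the entire argument is a two-line sandwich estimate, and the only ingredient beyond the definition of a $\theta$-extreme set is Lemma \ref{basic_inequality}. The only thing to be careful about is writing the iterated interlacing cleanly, since Lemma \ref{basic_inequality} is stated only for the deletion of a single vertex.
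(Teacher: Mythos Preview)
Your argument is correct: the sandwich between the iterated upper bound $\m(\theta,G\setminus Y)\le \m(\theta,G)+|Y|$ and the lower bound obtained from $\m(\theta,G\setminus X)\le \m(\theta,G\setminus Y)+|X\setminus Y|$ together with the extremality of $X$ forces equality. There is nothing to compare against here, since the paper does not prove this lemma itself but simply quotes it from \cite[Lemma~2.5]{KW}; your proof is exactly the standard one and would serve as the missing argument.
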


\begin {lm}\label{barrier_exclusion}\textnormal{\cite[Lemma 2.6]{KW}} If $X$ is a $\theta$-barrier set and $Y\subseteq X$ then $X\setminus Y$ is a $\theta$-barrier  set in $G\setminus Y$.
\end {lm}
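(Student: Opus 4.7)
The plan is to combine a general ``$\theta$-defect inequality'' with iterated interlacing and squeeze the resulting chain to equality using the barrier hypothesis on $X$.

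First, I would establish the following inequality, valid for any graph $H$ and any $S \subseteq V(H)$:
$$c_\theta(H \setminus S) - |S| \leq \m(\theta, H).$$
This breaks into two pieces. By part (a) of Theorem \ref{matching_property}, $\mu(H \setminus S, x)$ factors as the product of the matching polynomials of the components of $H \setminus S$; since a $\theta$-critical component contributes exactly $1$ to the multiplicity of $\theta$, this gives $c_\theta(H \setminus S) \leq \m(\theta, H \setminus S)$. Iterating the upper interlacing bound in Lemma \ref{basic_inequality} by deleting the vertices of $S$ one at a time yields $\m(\theta, H \setminus S) \leq \m(\theta, H) + |S|$, and combining the two gives the claim.

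Next, I would apply this inequality to $H = G \setminus Y$ with $S = X \setminus Y$. Since $Y \subseteq X$, we have $(G \setminus Y) \setminus (X \setminus Y) = G \setminus X$ and $|X \setminus Y| = |X| - |Y|$, so the inequality becomes
$$c_\theta(G \setminus X) - |X| + |Y| \leq \m(\theta, G \setminus Y).$$
Interlacing applied directly to $G$ (iterated $|Y|$ times) also gives $\m(\theta, G \setminus Y) \leq \m(\theta, G) + |Y|$. Substituting the barrier hypothesis $\m(\theta, G) = c_\theta(G \setminus X) - |X|$ makes the two outer bounds coincide, so equality holds throughout. The resulting identity
$$\m(\theta, G \setminus Y) = c_\theta((G \setminus Y) \setminus (X \setminus Y)) - |X \setminus Y|$$
is exactly the statement that $X \setminus Y$ is a $\theta$-barrier set in $G \setminus Y$. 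The only genuinely nontrivial content is the $\theta$-defect inequality of the first step; everything after that is a bookkeeping squeeze driven by the barrier equation for $X$, so I do not anticipate any real obstacle.
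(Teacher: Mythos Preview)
The paper does not supply its own proof of this lemma; it is quoted from \cite[Lemma~2.6]{KW} and used as a black box, so there is nothing in the present text to compare your argument against. On its own merits your proof is correct: the $\theta$-defect inequality $c_\theta(H\setminus S) - |S| \le \m(\theta,H)$ follows exactly as you indicate from factoring $\mu(H\setminus S,x)$ over components (part (a) of Theorem~\ref{matching_property}) together with iterated interlacing (Lemma~\ref{basic_inequality}), and the subsequent squeeze using the barrier hypothesis on $X$ forces the required equality. This is the standard shape such arguments take in the classical ($\theta=0$) setting, and is almost certainly what \cite{KW} does as well.
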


\begin {lm}\label{barrier_contains_extreme}\textnormal{\cite[Lemma 2.7]{KW}} Every $\theta$-extreme set of $G$ lies in a $\theta$-barrier set.
\end {lm}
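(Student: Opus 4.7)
The strategy is to build an explicit $\theta$-barrier set of $G$ that contains $X$. The guiding observation comes from Corollary \ref{Gallai_Edmond_decom}(ii): for any graph $H$, the set $A_\theta(H)$ is itself a $\theta$-barrier set of $H$, since
\[
c_\theta(H\setminus A_\theta(H)) - |A_\theta(H)| = \mathrm{mult}(\theta,H).
\]
This suggests the candidate $B := X \cup A_\theta(G\setminus X)$. Because $A_\theta(G\setminus X)\subseteq V(G\setminus X) = V(G)\setminus X$, the union is disjoint and $|B| = |X| + |A_\theta(G\setminus X)|$; moreover $X\subseteq B$ by construction.

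The verification is then a short calculation. Using the associativity of vertex deletion noted in Definition \ref{I:D1}, $G\setminus B = (G\setminus X)\setminus A_\theta(G\setminus X)$, so Corollary \ref{Gallai_Edmond_decom}(ii) applied to the graph $G\setminus X$ in place of $G$ gives
\[
c_\theta(G\setminus B) = |A_\theta(G\setminus X)| + \mathrm{mult}(\theta,G\setminus X).
\]
The hypothesis that $X$ is $\theta$-extreme yields $\mathrm{mult}(\theta,G\setminus X) = \mathrm{mult}(\theta,G) + |X|$. Substituting and subtracting $|B| = |X| + |A_\theta(G\setminus X)|$ leaves $c_\theta(G\setminus B) - |B| = \mathrm{mult}(\theta,G)$, so $B$ is a $\theta$-barrier set of $G$ containing $X$.

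I do not expect any serious obstacle, provided the $\theta$-Gallai-Edmonds framework (Theorems \ref{Gallai_Edmond_vertex}, \ref{Gallai_lemma} and Corollary \ref{Gallai_Edmond_decom}) is available; the only real content is spotting that the canonical Gallai-Edmonds barrier of $G\setminus X$, taken together with $X$ itself, is a barrier of $G$, with the numerical accounting closing up precisely because $X$ is extreme. A more laborious alternative would be induction on $|X|$: pick $v\in X$, use interlacing together with extremeness to force $\mathrm{mult}(\theta,G\setminus v)=\mathrm{mult}(\theta,G)+1$ and $X\setminus\{v\}$ to be $\theta$-extreme in $G\setminus v$ (compatible with Lemma \ref{subset_extreme}), then invoke the inductive hypothesis to obtain a barrier of $G\setminus v$ containing $X\setminus\{v\}$, and finally lift it back to $G$ by adjoining $v$. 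The direct construction above is cleaner, and the inductive version is only needed as a backup if some referenced ingredient is not yet available at this point in \cite{KW}.
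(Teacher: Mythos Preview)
Your argument is correct: setting $B = X \cup A_\theta(G\setminus X)$ and invoking Corollary~\ref{Gallai_Edmond_decom}(ii) for the graph $G\setminus X$ gives exactly $c_\theta(G\setminus B) - |B| = \mathrm{mult}(\theta,G)$, using the extremeness of $X$ to close the count. Note, however, that the present paper does not actually prove this lemma; it is quoted without proof from \cite[Lemma~2.7]{KW}, so there is no in-paper argument to compare against. Your direct construction is the natural one and almost certainly coincides with the proof in \cite{KW}; the inductive alternative you sketch is unnecessary here.
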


\begin {lm}\label{barrier_is_extreme}\textnormal{\cite[Lemma 2.8]{KW}} Let $X$ be a $\theta$-barrier set. Then $X$ is a $\theta$-extreme set.
\end {lm}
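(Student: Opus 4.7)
The plan is to sandwich $\mathrm{mult}(\theta, G\setminus X)$ between an upper bound coming from interlacing and a lower bound coming from the $\theta$-critical components, and observe that the barrier condition forces the two bounds to coincide with $\mathrm{mult}(\theta,G)+|X|$.

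First I would establish the upper bound: iterating the interlacing inequality in Lemma \ref{basic_inequality} along any enumeration of $X$ gives
\[
\mathrm{mult}(\theta, G\setminus X)\;\le\;\mathrm{mult}(\theta, G)+|X|.
\]
This uses only the general fact that removing one vertex changes the multiplicity by at most $1$.

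Next I would establish the lower bound via $\theta$-critical components. Since $G\setminus X$ is the disjoint union of its connected components, part (a) of Theorem \ref{matching_property} yields
\[
\mathrm{mult}(\theta,G\setminus X)\;=\;\sum_{C}\mathrm{mult}(\theta,C),
\]
the sum taken over the components $C$ of $G\setminus X$. Each $\theta$-critical component contributes exactly $1$ by Definition \ref{P:D4}, and every other component contributes a non-negative integer, so
\[
\mathrm{mult}(\theta,G\setminus X)\;\ge\;c_{\theta}(G\setminus X).
\]

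Finally I would invoke the hypothesis that $X$ is a $\theta$-barrier set, which by Definition \ref{barrier_def} says $c_{\theta}(G\setminus X)=\mathrm{mult}(\theta,G)+|X|$. Chaining the two bounds gives
\[
\mathrm{mult}(\theta,G)+|X|\;=\;c_{\theta}(G\setminus X)\;\le\;\mathrm{mult}(\theta,G\setminus X)\;\le\;\mathrm{mult}(\theta,G)+|X|,
\]
forcing equality throughout, which is exactly the $\theta$-extreme condition. There is no real obstacle here; the only care needed is to note that the multiplicities of all non-$\theta$-critical components are automatically non-negative, so they do not interfere with the lower bound.
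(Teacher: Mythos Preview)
Your argument is correct: the interlacing upper bound together with the component-wise lower bound $\mathrm{mult}(\theta,G\setminus X)\ge c_{\theta}(G\setminus X)$ sandwiches $\mathrm{mult}(\theta,G\setminus X)$ at exactly $\mathrm{mult}(\theta,G)+|X|$ once the barrier condition $c_{\theta}(G\setminus X)=\mathrm{mult}(\theta,G)+|X|$ is invoked. Note that the present paper does not actually supply a proof of this lemma; it is quoted from \cite[Lemma~2.8]{KW}, so there is no in-paper argument to compare against, but your sandwich proof is the standard one and almost certainly coincides with the cited source.
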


\begin {lm}\label{barrier_special_0}\textnormal{\cite[Lemma 3.1]{KW}} If $X$ is a $\theta$-barrier set then $X\subseteq A_{\theta}(G)\cup P_{\theta}(G)$.
\end {lm}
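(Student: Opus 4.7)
The plan is to reduce the claim to showing that every vertex of $X$ is $\theta$-positive, and then derive this from the lemmas on $\theta$-extreme sets that have already been stated. Since every $\theta$-special vertex is $\theta$-positive by Godsil's result cited earlier in the paper, we have $A_{\theta}(G) \cup P_{\theta}(G) = Q_{\theta}(G)$, the set of all $\theta$-positive vertices of $G$. Thus the assertion $X \subseteq A_{\theta}(G) \cup P_{\theta}(G)$ is equivalent to the statement that every $v \in X$ satisfies $\textnormal{mult}(\theta, G \setminus v) = \textnormal{mult}(\theta, G) + 1$.

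First I would invoke Lemma \ref{barrier_is_extreme} to conclude that the $\theta$-barrier set $X$ is itself a $\theta$-extreme set. Then, for any fixed $v \in X$, the singleton $\{v\} \subseteq X$ is also $\theta$-extreme by Lemma \ref{subset_extreme}. Unpacking the definition of $\theta$-extreme set for the one-element set $\{v\}$ gives precisely
\begin{equation*}
\textnormal{mult}(\theta, G \setminus v) = \textnormal{mult}(\theta, G) + 1,
\end{equation*}
so $v$ is $\theta$-positive. This places $v$ in $Q_{\theta}(G) = A_{\theta}(G) \cup P_{\theta}(G)$, which is exactly what we wanted. Since $v \in X$ was arbitrary, the inclusion $X \subseteq A_{\theta}(G) \cup P_{\theta}(G)$ follows.

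I do not expect any genuine obstacle here, since the two cited lemmas do essentially all the work; no induction, no case analysis, and no appeal to the Heilmann--Lieb identity or the $\theta$-Gallai lemma is required. The trivial case $X = \varnothing$ is vacuously fine. What is perhaps worth emphasizing in the write-up is the conceptual point that, even though $\theta$-barrier sets are defined via a counting condition on $\theta$-critical components of $G \setminus X$, their restriction to individual vertices is cleanly controlled by the much simpler notion of $\theta$-extreme set, so no structural argument tied directly to the Gallai--Edmonds decomposition is needed.
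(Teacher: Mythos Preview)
Your argument is correct. Note, however, that the present paper does not supply its own proof of this lemma: it is quoted verbatim from \cite[Lemma~3.1]{KW} and used as a black box, so there is no in-paper proof to compare against. Your derivation via Lemma~\ref{barrier_is_extreme} and Lemma~\ref{subset_extreme} is a clean and self-contained justification, and since in \cite{KW} those two lemmas (numbered 2.8 and 2.5) precede Lemma~3.1, there is no danger of circularity.
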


\begin {lm}\label{barrier_characterization}\textnormal{\cite[Theorem 3.5]{KW}} Let $X$ be a $\theta$-barrier set in $G$. Then $A_{\theta}(G)\subseteq X$.
\end {lm}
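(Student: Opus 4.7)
My plan is to argue by induction on $|X|$; throughout, write $M = \textnormal{mult}(\theta, G)$.

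For the base case $|X| = 0$, the barrier hypothesis $c_\theta(G) = M$ together with $\textnormal{mult}(\theta, G) = \sum_{\text{components } H} \textnormal{mult}(\theta, H) \geq c_\theta(G)$ forces every component of $G$ to be either $\theta$-critical or of multiplicity zero. If $v \in A_\theta(G)$, then its neighbor $w \in D_\theta(G)$ lies in the same component of $G$, which has multiplicity at least $1$ and hence is $\theta$-critical; but then $v$ is $\theta$-essential, contradicting $v \in A_\theta(G) \subseteq Q_\theta(G)$. So $A_\theta(G) = \varnothing \subseteq X$.

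For the inductive step, Lemma \ref{barrier_special_0} gives $X \subseteq A_\theta(G) \cup P_\theta(G)$. If $X \cap A_\theta(G)$ is nonempty, pick $x \in X \cap A_\theta(G)$: then $X \setminus \{x\}$ is a $\theta$-barrier in $G \setminus x$ by Lemma \ref{barrier_exclusion}; the inductive hypothesis gives $A_\theta(G \setminus x) \subseteq X \setminus \{x\}$, and the $\theta$-Stability Lemma (Theorem \ref{Gallai_Edmond_vertex}) gives $A_\theta(G \setminus x) = A_\theta(G) \setminus \{x\}$, so $A_\theta(G) \subseteq X$. Otherwise $X \subseteq P_\theta(G)$, and the task reduces to showing $A_\theta(G) = \varnothing$ in this subcase.

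Suppose for contradiction $v \in A_\theta(G)$ with $\theta$-essential neighbor $w \in D_\theta(G)$; both lie in the same component $H$ of $G \setminus X$. Combining Lemma \ref{barrier_is_extreme} with the barrier hypothesis yields $\textnormal{mult}(\theta, G \setminus X) = c_\theta(G \setminus X) = M + |X|$, so every component of $G \setminus X$ is either $\theta$-critical or has multiplicity zero. A double-interlacing argument, intersecting the range $[M + |X| - 1, M + |X| + 1]$ (from interlacing on $G \setminus X$) with $[M - 1 - |X|, M - 1 + |X|]$ (from iterated interlacing on $G \setminus w$, using $\textnormal{mult}(\theta, G \setminus w) = M - 1$), pins $\textnormal{mult}(\theta, G \setminus X \setminus w) = M + |X| - 1$. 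So $w$ is $\theta$-essential in $G \setminus X$, $H$ is $\theta$-critical, and $v$ too is $\theta$-essential in $G \setminus X$, giving $\textnormal{mult}(\theta, G \setminus v \setminus X) = M + |X| - 1$.

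The main obstacle is extracting a contradiction from this status change. For $|X| = 1$ this is immediate: writing $X = \{x\}$, Theorem \ref{Gallai_Edmond_vertex} gives $x \in P_\theta(G \setminus v)$, so $\textnormal{mult}(\theta, G \setminus v \setminus x) = \textnormal{mult}(\theta, G \setminus v) + 1 = M + 2$, contradicting the value $M + |X| - 1 = M$ just computed. For $|X| \geq 2$, I would combine this with Corollary \ref{Gallai_Edmond_decom} applied to both $G$ and $G \setminus X$: the former gives $c_\theta(G \setminus A_\theta(G)) = |A_\theta(G)| + M$, while the latter, together with the structural observation that $A_\theta(G \setminus X) = \varnothing$ (since a $\theta$-special vertex in $G \setminus X$ would sit in a multiplicity-zero component adjacent to a $\theta$-essential vertex, impossible within a single component), shows that $G \setminus X$ consists purely of $\theta$-critical components and multiplicity-zero pieces. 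A careful bookkeeping of how $X \subseteq P_\theta(G)$ splits the multiplicity-zero components of $G \setminus A_\theta(G)$ should produce a numerical inequality inconsistent with $c_\theta(G \setminus X) = M + |X|$.
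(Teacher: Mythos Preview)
The paper does not give its own proof of this lemma; it is quoted from \cite[Theorem~3.5]{KW}. So there is nothing in the paper to compare against, and your proposal has to stand on its own.

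Your base case and the subcase $X\cap A_\theta(G)\neq\varnothing$ are correct. The gap is in the subcase $X\subseteq P_\theta(G)$: you carry the argument through only for $|X|=1$, and for $|X|\ge 2$ you offer a sketch (``careful bookkeeping \dots\ should produce a numerical inequality'') that is never executed. The observation that $A_\theta(G\setminus X)=\varnothing$ does not by itself contradict $c_\theta(G\setminus X)=M+|X|$, and nothing in Corollary~\ref{Gallai_Edmond_decom} controls how vertices of $P_\theta(G)$ fragment the zero-multiplicity components of $G\setminus A_\theta(G)$.

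The fix is to actually use the induction hypothesis you set up. Pick any $x\in X$; by Lemma~\ref{barrier_exclusion}, $X\setminus\{x\}$ is a $\theta$-barrier in $G\setminus x$, so by induction $A_\theta(G\setminus x)\subseteq X\setminus\{x\}$. Your double-interlacing argument, applied with the single vertex $x$ in place of all of $X$, shows that $w$ is $\theta$-essential in $G\setminus x$. Since $v$ is adjacent to $w$, if $v$ were not $\theta$-essential in $G\setminus x$ it would be $\theta$-special there, forcing $v\in A_\theta(G\setminus x)\subseteq X\setminus\{x\}\subseteq P_\theta(G)$, impossible as $v\in A_\theta(G)$. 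Hence $v$ is $\theta$-essential in $G\setminus x$, giving $\m(\theta,G\setminus x\setminus v)=(M+1)-1=M$. On the other hand, the $\theta$-Stability Lemma applied at $v\in A_\theta(G)$ gives $x\in P_\theta(G)=P_\theta(G\setminus v)$, whence $\m(\theta,G\setminus v\setminus x)=(M+1)+1=M+2$. This contradiction handles all $|X|\ge 1$ uniformly and replaces your incomplete final paragraph.
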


\begin{lm}\label{barrier_special} Let $G$ be a graph. If $X$ is a $\theta$-barrier set in $G$, $x\in X$ and $P_{\theta}(G\setminus x)=\varnothing$, then $A_{\theta}(G\setminus x)=X\setminus x$.
\end{lm}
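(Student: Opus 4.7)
The plan is to apply the three cited structural lemmas about $\theta$-barrier sets in succession; there is no real obstacle here, since $P_\theta(G\setminus x) = \varnothing$ is exactly what is needed to squeeze the containments together.

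First I would use Lemma \ref{barrier_exclusion} with $Y = \{x\}$ to deduce that $X \setminus \{x\}$ is a $\theta$-barrier set, not in $G$, but in the vertex-deleted subgraph $G \setminus x$. This is the key move: it lets me pass from a barrier-set hypothesis about $G$ to a barrier-set hypothesis about $G \setminus x$, which is the graph whose $A_\theta$ appears in the conclusion.

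Next I would apply Lemma \ref{barrier_special_0} to $G \setminus x$ with barrier set $X \setminus \{x\}$, obtaining
\begin{equation}
X \setminus \{x\} \;\subseteq\; A_\theta(G\setminus x) \cup P_\theta(G\setminus x). \notag
\end{equation}
The hypothesis $P_\theta(G\setminus x) = \varnothing$ then immediately collapses this to $X \setminus \{x\} \subseteq A_\theta(G\setminus x)$, giving one of the two required inclusions.

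Finally I would apply Lemma \ref{barrier_characterization} to $G\setminus x$ with barrier set $X\setminus\{x\}$, which yields the reverse inclusion $A_\theta(G\setminus x) \subseteq X \setminus \{x\}$. Combining the two inclusions gives the claimed equality $A_\theta(G\setminus x) = X \setminus \{x\}$. The whole argument is essentially a one-line chain of citations once one recognizes that the role of $P_\theta(G\setminus x) = \varnothing$ is just to force the containment from Lemma \ref{barrier_special_0} into $A_\theta(G\setminus x)$ alone.
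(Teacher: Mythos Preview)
Your proposal is correct and follows exactly the same approach as the paper's proof: apply Lemma~\ref{barrier_exclusion} to get that $X\setminus\{x\}$ is a $\theta$-barrier set in $G\setminus x$, then use Lemma~\ref{barrier_special_0} together with $P_\theta(G\setminus x)=\varnothing$ for one inclusion, and Lemma~\ref{barrier_characterization} for the other.
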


\begin{proof} By Lemma \ref{barrier_exclusion}, $X\setminus x$ is a $\theta$-barrier set in $G\setminus x$. By Lemma \ref{barrier_special_0}, $X\setminus x\subseteq A_{\theta}(G\setminus x)\cup P_{\theta}(G\setminus x)$. Therefore  $X\setminus x\subseteq A_{\theta}(G\setminus x)$. It then follows from Lemma \ref{barrier_characterization} that $A_{\theta}(G\setminus x)=X\setminus x$.
\end{proof}

\begin {dfn}\label{barrier_partition_def} We define $\mathfrak P(\theta,G)$ to be the set of all the $\theta$-barrier sets in $G$.
\end {dfn}

Note that in Figure 3, $\mathfrak P(1,G)=\{\{u_1\},\{u_2\},\{u_3,u_4\},\{u_5\},\{u_6\} \}$. Now Lemma \ref{positive_lemma1} follows from part (c) of Theorem \ref{matching_property}.
\begin{lm}\label{positive_lemma1} Suppose $G$ is $\theta$-super positive. Then for each $v\in V(G)$ there is a $u\in V(G)$ with $(u,v)\in E(G)$ and $\textnormal{mult} (\theta, G\setminus uv)=0$.
\end{lm}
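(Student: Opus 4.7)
The plan is to apply part (c) of Theorem \ref{matching_property} at the vertex $v$ and evaluate at $x=\theta$. Specifically, we have
\begin{equation}
\mu(G,\theta) = \theta\,\mu(G\setminus v,\theta) - \sum_{i\sim v}\mu(G\setminus vi,\theta).\notag
\end{equation}
Since $G$ is $\theta$-super positive, $\mu(G,\theta)\neq 0$ while $\mu(G\setminus v,\theta)=0$. The first term on the right-hand side therefore vanishes, forcing
\begin{equation}
-\sum_{i\sim v}\mu(G\setminus vi,\theta) = \mu(G,\theta) \neq 0.\notag
\end{equation}
Hence at least one summand $\mu(G\setminus uv,\theta)$ must be nonzero, which means $\textnormal{mult}(\theta,G\setminus uv)=0$ for that particular neighbor $u$ of $v$. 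This gives the desired edge.

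There is essentially no obstacle here; the lemma is an immediate one-line consequence of the recurrence in Theorem \ref{matching_property}(c) combined with the definition of $\theta$-super positivity. The only thing to be slightly careful about is to note that the set of neighbors of $v$ is nonempty (otherwise $v$ would be an isolated vertex, making $\mu(G\setminus v,x)=\mu(G,x)/x$ and contradicting $\mu(G,\theta)\neq 0 = \mu(G\setminus v,\theta)$ unless $\theta = 0$ and even then the cancellation would fail to produce $\mu(G,\theta)\neq 0$; alternatively, one observes that if $v$ were isolated then the recurrence reduces to $\mu(G,\theta)=\theta\mu(G\setminus v,\theta)=0$, a contradiction). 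Thus $v$ has at least one neighbor and the argument above applies.
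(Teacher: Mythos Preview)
Your proof is correct and is exactly the argument the paper has in mind: the paper simply states that the lemma ``follows from part (c) of Theorem \ref{matching_property},'' and your expansion of that one-liner via the recurrence $\mu(G,\theta)=\theta\,\mu(G\setminus v,\theta)-\sum_{i\sim v}\mu(G\setminus vi,\theta)$ is precisely the intended reasoning. The side remark about $v$ having at least one neighbor is a harmless elaboration (and correct).
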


\begin{thm}\label{elementary_characterization} A graph $G$ is $\theta$-elementary if and only if $\mathfrak P(\theta,G)$ is a partition of $V(G)$.
\end{thm}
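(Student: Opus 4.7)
\emph{Forward direction.} Assume $G$ is $\theta$-elementary. For each $v\in V(G)$, I plan to prove that $X_v := \{v\}\cup A_\theta(G\setminus v)$ is the unique $\theta$-barrier of $G$ containing $v$. Since $G\setminus X_v = (G\setminus v)\setminus A_\theta(G\setminus v)$, Corollary~\ref{Gallai_Edmond_decom}(ii) applied to $G\setminus v$ (noting $\textnormal{mult}(\theta,G\setminus v)=1$) gives $c_\theta(G\setminus X_v) = |A_\theta(G\setminus v)|+1 = |X_v|$, which together with $\textnormal{mult}(\theta,G)=0$ shows $X_v$ is a $\theta$-barrier. For uniqueness: if $X$ is any $\theta$-barrier of $G$ with $v\in X$, Lemma~\ref{barrier_exclusion} makes $X\setminus v$ a $\theta$-barrier of $G\setminus v$; Lemma~\ref{barrier_special_0} combined with the elementary hypothesis $P_\theta(G\setminus v)=\varnothing$ forces $X\setminus v\subseteq A_\theta(G\setminus v)$, while Lemma~\ref{barrier_characterization} gives the reverse inclusion, so $X=X_v$. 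Hence every vertex lies in a unique barrier, and $\mathfrak P(\theta,G)$ partitions $V(G)$.

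\emph{Reverse direction: super positivity and reduction.} Assume $\mathfrak P(\theta,G)$ partitions $V(G)$, and that $V(G)\neq\varnothing$. A one-part partition $\{V(G)\}$ is impossible because it would force $\textnormal{mult}(\theta,G)=-|V(G)|<0$; with at least two parts, Lemma~\ref{barrier_characterization} places $A_\theta(G)$ in the empty pairwise intersection, so $A_\theta(G)=\varnothing$, and Lemma~\ref{barrier_special_0} puts $V(G)=\bigcup_j X_j$ inside $P_\theta(G)$, giving $D_\theta(G)=\varnothing$. Corollary~\ref{Gallai_Edmond_decom}(ii) and (iv) then rule out $\theta$-critical components of $G$, so $\textnormal{mult}(\theta,G)=0$, hence $\textnormal{mult}(\theta,G\setminus v)=1$ for every $v$ and $G$ is $\theta$-super positive; Theorem~\ref{neutral_empty_positive} yields $N_\theta(G\setminus v)=\varnothing$. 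Now let $X_v$ denote the unique part containing $v$. A direct check from the barrier definition (using $\textnormal{mult}(\theta,G)=0$ and $\textnormal{mult}(\theta,G\setminus v)=1$) shows $Y$ is a $\theta$-barrier of $G\setminus v$ iff $Y\cup\{v\}$ is a $\theta$-barrier of $G$, so by the partition hypothesis $X_v\setminus v$ is the only $\theta$-barrier of $G\setminus v$. Combining Lemmas~\ref{barrier_special_0}, \ref{barrier_characterization}, and \ref{barrier_contains_extreme} (the last applied to each singleton $\{u\}\subseteq P_\theta(G\setminus v)$, which must lie in the unique barrier) yields $A_\theta(G\setminus v)\cup P_\theta(G\setminus v) = X_v\setminus v$ and hence $D_\theta(G\setminus v) = V(G)\setminus X_v$.

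\emph{Main obstacle.} The hard step is promoting the above to $A_\theta(G\setminus v) = X_v\setminus v$, equivalently $P_\theta(G\setminus v)=\varnothing$. For any $u\in X_v\setminus v$, I plan to apply part~(c) of Theorem~\ref{matching_property} to expand $\mu(G\setminus v,x)$ at the vertex $u$. Subsets of the $\theta$-extreme set $X_v$ are $\theta$-extreme (Lemmas~\ref{subset_extreme} and \ref{barrier_is_extreme}), giving $\textnormal{mult}(\theta, G\setminus uv)=2$ and $\textnormal{mult}(\theta, G\setminus uvi)=3$ for every $i\in X_v\setminus\{u,v\}$. If every neighbor of $u$ in $G\setminus v$ were contained in $X_v\setminus\{u,v\}$, then comparing multiplicities at $\theta$ across the recursion $\mu(G\setminus v,x) = x\mu(G\setminus uv, x) - \sum_{i\sim u}\mu(G\setminus uvi, x)$ (separating $\theta\neq 0$ from $\theta=0$ to account for the extra factor $x$) would force $\textnormal{mult}(\theta,G\setminus v)\geq 2$, contradicting super positivity. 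Therefore $u$ must have a neighbor $w\in V(G)\setminus X_v = D_\theta(G\setminus v)$, making $u$ adjacent to a $\theta$-essential vertex of $G\setminus v$, so $u\in A_\theta(G\setminus v)$. This gives $X_v\setminus v\subseteq A_\theta(G\setminus v)$ and completes the proof.
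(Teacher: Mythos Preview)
Your forward direction is essentially the paper's argument: you construct $X_v=\{v\}\cup A_\theta(G\setminus v)$ and show it is the unique barrier through $v$, which is precisely the content of Lemma~\ref{barrier_special} as used in the paper.

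Your reverse direction, however, is a genuinely different and somewhat cleaner argument. The paper proceeds by contradiction: assuming $P_\theta(G\setminus v_0)\neq\varnothing$, it finds a $\theta$-super positive component $H$ of $(G\setminus v_0)\setminus A_\theta(G\setminus v_0)$, invokes Lemma~\ref{positive_lemma1} to extract an edge $(w,z)$ in $H$ with $\m(\theta,H\setminus wz)=0$, and then computes $\m\bigl(\theta,\,((G\setminus v_0)\setminus A_\theta(G\setminus v_0))\setminus wz\bigr)$ in two ways (once as $1+|A_\theta(G\setminus v_0)|$ via the component count, once as $3+|A_\theta(G\setminus v_0)|$ via the extreme-set property of $\{w,z\}$ inside $S_1$) to reach a contradiction. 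You instead (i) observe the bijection $Y\leftrightarrow Y\cup\{v\}$ between barriers of $G\setminus v$ and barriers of $G$ containing $v$, which immediately pins down $D_\theta(G\setminus v)=V(G)\setminus X_v$, and then (ii) for each $u\in X_v\setminus v$ apply the recursion Theorem~\ref{matching_property}(c) at $u$: since every $2$- or $3$-subset of $X_v$ is $\theta$-extreme, all neighbors of $u$ lying in $X_v$ contribute terms of $\theta$-multiplicity $\ge 3$, and the leading term has multiplicity $2$ (or $3$ when $\theta=0$), forcing $\m(\theta,G\setminus v)\ge 2$ unless $u$ has a neighbor outside $X_v$, i.e.\ in $D_\theta(G\setminus v)$. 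That makes $u$ $\theta$-special. The paper's route leans on the Gallai--Edmonds component structure and the auxiliary Lemma~\ref{positive_lemma1}; yours replaces this by the barrier bijection plus a single local multiplicity count, which is shorter and avoids analyzing components of $(G\setminus v)\setminus A_\theta(G\setminus v)$ altogether.
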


\begin{proof} Let $\mathfrak P(\theta,G)=\{S_1,\dots, S_k\}$.

\noindent
($\Rightarrow $) Suppose $G$ is $\theta$-elementary. Then for each $v\in V(G)$, $\{v\}$ is a $\theta$-extreme set. By Lemma \ref{barrier_contains_extreme}, it is contained in some $\theta$-barrier set. Therefore $V(G)=S_1\cup\cdots\cup S_k$. It remains to prove that $S_i\cap S_j=\varnothing$ for $i\neq j$. Suppose the contrary. Let $x\in S_i\cap S_j$. By Lemma \ref{barrier_special}, $S_i\setminus \{x\}=A_{\theta}(G\setminus x)=S_j\setminus \{x\}$ and so $S_i=S_j$, a contradiction. Hence $S_i\cap S_j=\varnothing$ for $i\neq j$ and $\mathfrak P(\theta,G)$ is a partition of $V(G)$.

\noindent
($\Leftarrow $) Suppose $\mathfrak P(\theta,G)$ is a partition of $V(G)$. Let $v\in V(G)$. Then $v\in S_i$ for some $\theta$-barrier set $S_i$. By Lemma \ref{barrier_special_0}, $v\in A_{\theta}(G)\cup P_{\theta}(G)$. Therefore $V(G)\subseteq A_{\theta}(G)\cup P_{\theta}(G)$. This implies that $\textnormal {mult} (\theta, G)=0$, for otherwise $D_{\theta}(G)\neq \varnothing$ by Lemma \ref{existence_essential}. Hence $A_{\theta}(G)=\varnothing$ and  $V(G)= P_{\theta}(G)$, i.e., $G$ is $\theta$-super positive. It remains to show that  $P_{\theta}(G\setminus v)=\varnothing$ for all $v\in V(G)$. Suppose the contrary. Then $P_{\theta}(G\setminus v_0)\neq\varnothing$ for some $v_0\in V(G)$. We may assume $v_0\in S_1$. By Corollary \ref{Gallai_Edmond_decom}, $(G\setminus v_0)\setminus A_{\theta}(G\setminus v_0)$ has a component $H$ for which $\textnormal {mult} (\theta,H)=0$. By Theorem \ref{neutral_empty_positive}, $N_{\theta}(G\setminus v_0)=\varnothing$. So we conclude that $H$ is $\theta$-super positive. Let $w\in H$. By Lemma \ref{positive_lemma1}, there is a $z\in V(H)$  with $(w,z)\in E(H)$ and $\textnormal{mult} (\theta, H\setminus wz)=0$. By part (a) of Theorem \ref{matching_property}, and, (ii) and (iii) of Corollary \ref{Gallai_Edmond_decom}, $\textnormal{mult} (\theta, ((G\setminus v_0)\setminus A_{\theta}(G\setminus v_0))\setminus wz)=1+\vert A_{\theta}(G\setminus v_0)\vert$.

On the other hand, by Lemma \ref{barrier_exclusion}, $S_1\setminus \{v_0\}$ is a $\theta$-barrier set in $G\setminus v_0$. So by Lemma \ref{barrier_characterization},  $A_{\theta}(G\setminus v_0)\subseteq S_1\setminus \{v_0\}$. By Lemma \ref{barrier_exclusion} again, $S_1\setminus (\{v_0\}\cup A_{\theta}(G\setminus v_0))$ is a $\theta$-barrier set in $(G\setminus v_0)\setminus A_{\theta}(G\setminus v_0)$. Note that $w$ is $\theta$-positive in $G\setminus v_0$ (by Corollary \ref{Gallai_Edmond_decom}). Therefore $\{w,v_0\}$ is an $\theta$-extreme set. By Lemma \ref{barrier_contains_extreme}, $\{w,v_0\}$ is contained in some $\theta$-barrier set. Since $\mathfrak P(\theta,G)$ is a partition of $V(G)$ and $v_0\in S_1$, we must have $\{w,v_0\}\subseteq S_1$. Note also  $z$ is $\theta$-positive in $G\setminus v_0$ (recall that $H$ is $\theta$-super positive). Using a similar argument, we can show that $\{z,v_0\}\subseteq S_1$. By Lemma \ref{subset_extreme} and Lemma \ref{barrier_is_extreme}, we conclude that $\{w,z\}\subseteq S_1\setminus (\{v_0\}\cup A_{\theta}(G\setminus v_0))$ is a $\theta$-extreme set in $(G\setminus v_0)\setminus A_{\theta}(G\setminus v_0)$. This implies that $\textnormal{mult} (\theta, ((G\setminus v_0)\setminus A_{\theta}(G\setminus v_0))\setminus wz)=3+\vert A_{\theta}(G\setminus v_0)\vert$, contradicting the last sentence of the preceding paragraph. Hence $P_{\theta}(G\setminus v)=\varnothing$ for all $v\in V(G)$ and $G$ is $\theta$-elementary.
\end{proof}

\begin{lm}\label{partition_property} Suppose $G$ is $\theta$-elementary. Then for each $\varnothing\neq X\subseteq S\in \mathfrak P(\theta,G)$, $A_{\theta}(G\setminus X)=S\setminus X$ and $P_{\theta}(G\setminus X)\cup N_{\theta}(G\setminus X)=\varnothing$.
\end{lm}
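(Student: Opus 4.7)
I would prove this by induction on $\vert X\vert$, using Lemma \ref{barrier_special} as the base case and the $\theta$-Stability Lemma (Theorem \ref{Gallai_Edmond_vertex}) for the inductive step.

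For the base case $\vert X\vert = 1$, write $X = \{x\}$. Since $G$ is $\theta$-elementary, $P_{\theta}(G\setminus x)=\varnothing$ by definition, and $N_{\theta}(G\setminus x)=\varnothing$ by Theorem \ref{neutral_empty_positive} (as $G$ is in particular $\theta$-super positive). Since $S\in \mathfrak P(\theta,G)$ is a $\theta$-barrier set containing $x$, Lemma \ref{barrier_special} yields $A_{\theta}(G\setminus x)=S\setminus x$. This handles the base case.

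For the inductive step, suppose the result holds whenever the deleted subset has size $r-1 \geq 1$, and let $X \subseteq S$ with $\vert X\vert = r$. Pick any $x \in X$ and set $X' = X\setminus\{x\}$. By the induction hypothesis applied to $X'$, we have $A_{\theta}(G\setminus X') = S\setminus X'$ and $P_{\theta}(G\setminus X')\cup N_{\theta}(G\setminus X')=\varnothing$. In particular $x\in S\setminus X' = A_{\theta}(G\setminus X')$; moreover $\textnormal{mult}(\theta,G\setminus X')>0$ because the existence of a $\theta$-special vertex forces the existence of a $\theta$-essential vertex (by the very definition of $\theta$-special). Therefore Theorem \ref{Gallai_Edmond_vertex} applies to $G\setminus X'$ with the deleted $\theta$-special vertex $x$, giving
\begin{align*}
A_{\theta}(G\setminus X) &= A_{\theta}((G\setminus X')\setminus x) = A_{\theta}(G\setminus X')\setminus\{x\} = S\setminus X,\\
P_{\theta}(G\setminus X) &= P_{\theta}(G\setminus X') = \varnothing,\\
N_{\theta}(G\setminus X) &= N_{\theta}(G\setminus X') = \varnothing,
\end{align*}
which closes the induction.

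There is no real obstacle here: the argument is essentially bookkeeping that combines the partition structure of $\mathfrak P(\theta,G)$ (via Lemma \ref{barrier_special}) with the Stability Lemma. The only subtle point I would check carefully is that the inductive hypothesis indeed supplies a $\theta$-special vertex in $G\setminus X'$ to which Theorem \ref{Gallai_Edmond_vertex} can be applied; this is precisely guaranteed by the fact that $V(G\setminus X') = D_{\theta}(G\setminus X')\cup A_{\theta}(G\setminus X')$ with $A_{\theta}(G\setminus X') = S\setminus X' \neq \varnothing$, which forces $\textnormal{mult}(\theta,G\setminus X')\geq 1$.
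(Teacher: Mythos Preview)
Your proof is correct and follows essentially the same approach as the paper: both establish the case of a single deleted vertex using the definition of $\theta$-elementary, Theorem~\ref{neutral_empty_positive}, and Lemma~\ref{barrier_special}, and then invoke the $\theta$-Stability Lemma (Theorem~\ref{Gallai_Edmond_vertex}) to handle the remaining vertices of $X$. The paper compresses the iteration into a single sentence after noting $X\setminus\{x\}\subseteq A_{\theta}(G\setminus x)$, whereas you spell it out as a formal induction on $\vert X\vert$; the content is the same.
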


\begin{proof} Let $x\in X$. Then $P_{\theta}(G\setminus x)=\varnothing$. By Theorem \ref{neutral_empty_positive}, $N_{\theta}(G\setminus x)=\varnothing$. Now by Lemma \ref{barrier_special}, $S\setminus \{x\}=A_{\theta}(G\setminus x)$ so that $X\setminus \{x\} \subseteq S \setminus \{x\} = A_{\theta}(G\setminus x)$. By Theorem \ref{Gallai_Edmond_vertex}, we conclude that $A_{\theta}(G\setminus X)=S\setminus X$ and $P_{\theta}(G\setminus X)\cup N_{\theta}(G\setminus X)=\varnothing$.
\end{proof}

\begin{cor}\label{partition_charac} Suppose $G$ is $\theta$-elementary. Let $S\subseteq V(G)$. Then $S\in \mathfrak P(\theta,G)$ if and only if $G\setminus S$ has exactly $\vert S\vert$ components and each is $\theta$-critical.
\end{cor}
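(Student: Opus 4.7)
The plan is to prove both directions using the machinery already developed. The converse direction ($\Leftarrow$) is essentially a definitional check, while the forward direction ($\Rightarrow$) combines Lemma \ref{partition_property} with Corollary \ref{Gallai_Edmond_decom}(iv).

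For the converse, suppose $G\setminus S$ has exactly $|S|$ components, each $\theta$-critical. Then $c_{\theta}(G\setminus S) = |S|$ by definition. Since $G$ is $\theta$-elementary, it is $\theta$-super positive, so $\textnormal{mult}(\theta, G) = 0$. Therefore $\textnormal{mult}(\theta, G) = 0 = c_{\theta}(G\setminus S) - |S|$, which is exactly the defining condition for $S \in \mathfrak{P}(\theta, G)$.

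For the forward direction, suppose $S \in \mathfrak{P}(\theta, G)$. By Theorem \ref{elementary_characterization}, $\mathfrak{P}(\theta, G)$ partitions $V(G)$, so (assuming the trivial case $V(G)=\varnothing$ aside) $S$ is nonempty and Lemma \ref{partition_property} applies with $X = S$. This yields $A_{\theta}(G\setminus S) = \varnothing$ and $P_{\theta}(G\setminus S) \cup N_{\theta}(G\setminus S) = \varnothing$, so every vertex of $G\setminus S$ is $\theta$-essential, i.e.\ $V(G\setminus S) = D_{\theta}(G\setminus S)$. Now apply Corollary \ref{Gallai_Edmond_decom}(iv) to $G\setminus S$: since $A_{\theta}(G\setminus S) = \varnothing$, we have $(G\setminus S)\setminus A_{\theta}(G\setminus S) = G\setminus S$, and the subgraph induced by $D_{\theta}(G\setminus S) = V(G\setminus S)$ consists of all the $\theta$-critical components of $G\setminus S$. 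In other words, every component of $G\setminus S$ is $\theta$-critical, and $c_{\theta}(G\setminus S)$ equals the total number of components of $G\setminus S$. Finally, the barrier condition combined with $\textnormal{mult}(\theta,G)=0$ gives $c_{\theta}(G\setminus S)=|S|$, completing the proof.

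There is no real obstacle: the crucial input is Lemma \ref{partition_property}, which guarantees that deleting a full $\theta$-barrier set from a $\theta$-elementary graph leaves a graph in which every vertex is $\theta$-essential. Once this is in hand, the structural conclusion on components is an immediate consequence of the Gallai-Edmonds-type decomposition in Corollary \ref{Gallai_Edmond_decom}(iv).
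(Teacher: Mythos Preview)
Your proof is correct and follows essentially the same approach as the paper's: the converse is the definitional check you gave, and the forward direction is exactly what the paper means when it says ``follows from Lemma \ref{partition_property} and Corollary \ref{Gallai_Edmond_decom}.'' Your version simply spells out the details that the paper leaves implicit.
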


\begin{proof} Suppose $G\setminus S$ has exactly $\vert S\vert$ components and each is $\theta$-critical. Then $c_{\theta}(G\setminus S)=\vert S\vert$ and $S$ is a barrier set. Hence $S\in \mathfrak P(\theta,G)$.

The other implication follows from Lemma \ref{partition_property} and Corollary \ref{Gallai_Edmond_decom}.
\end{proof}

\section{$1$-elementary cycles}

We shall need the following lemmas.

\begin{lm}\label{hamiltonian_path}\textnormal {\cite[Corollary 4.4]{KW2}}
Suppose $G$ has a Hamiltonian path $P$ and $\theta$ is a root of $\mu(G,x)$. Then every vertex of $G$ which is not $\theta$-essential must be $\theta$-special.
\end{lm}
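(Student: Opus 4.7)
The plan is to combine the component count from Corollary \ref{Gallai_Edmond_decom}(ii) with a constraint on how many components $G\setminus A_\theta(G)$ can have, which the Hamiltonian path will supply. Set $A = A_{\theta}(G)$, $a = |A|$, and $m = \textnormal{mult}(\theta,G) \geq 1$. By Corollary \ref{Gallai_Edmond_decom}(ii), $G\setminus A$ has exactly $a + m$ $\theta$-critical components; in view of Corollary \ref{Gallai_Edmond_decom}(iv) it then suffices to show that $G\setminus A$ has no \emph{other} components, since that would give $V(G)\setminus A = D_{\theta}(G)$ and force every vertex outside $D_{\theta}(G)$ to lie in $A = A_{\theta}(G)$, i.e.\ to be $\theta$-special by definition.

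The Hamiltonian path $P = v_1 v_2 \cdots v_n$ supplies the needed upper bound. Consider the maximal sub-paths of $P$ whose vertices avoid $A$. Since all $a$ vertices of $A$ appear on $P$ and any two consecutive such sub-paths are separated on $P$ by at least one vertex of $A$, the number of these sub-paths is at most $a+1$. Each such sub-path is a connected subgraph of $G\setminus A$ (its edges are edges of $P$, hence edges of $G$ not incident to $A$), so it lies in one component of $G\setminus A$; conversely, every vertex of $V(G)\setminus A$ lies in exactly one sub-path, so every component of $G\setminus A$ contains at least one sub-path. Hence the number of components of $G\setminus A$ is at most $a+1$.

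Combining the two bounds yields
\[
a + m \;\leq\; \#\{\text{components of } G\setminus A\} \;\leq\; a + 1,
\]
which forces every component of $G\setminus A$ to be $\theta$-critical (and incidentally shows $m=1$). By Corollary \ref{Gallai_Edmond_decom}(iv), $V(G)\setminus A = D_{\theta}(G)$, so $V(G) = D_{\theta}(G) \cup A_{\theta}(G)$, completing the proof.

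The only delicate point I anticipate is the sub-path count itself: one has to check carefully that each maximal non-$A$ run gives a genuinely connected piece of $G\setminus A$ (immediate from $P$ being a path of $G$) and that each component really does pick up at least one such run (immediate from the sub-paths partitioning $V(G)\setminus A$). Once these bookkeeping points are written out, the argument reduces to the inequality above and an appeal to Corollary \ref{Gallai_Edmond_decom}.
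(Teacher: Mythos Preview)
Your argument is correct: the count of $\theta$-critical components from Corollary~\ref{Gallai_Edmond_decom}(ii) gives the lower bound $a+m$, the Hamiltonian path forces at most $a+1$ components in $G\setminus A$, and the resulting squeeze $a+m\le a+1$ (with $m\ge 1$) shows every component is $\theta$-critical, hence $P_\theta(G)\cup N_\theta(G)=\varnothing$.

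Note that the present paper does not actually prove this lemma; it is quoted without proof from \cite[Corollary~4.4]{KW2}, so there is no in-paper argument to compare against. Your proof is the standard one and is almost certainly what appears in the cited reference.
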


\begin {lm}\label{value_path} Let $p_n$ be a path with $n\geq 1$ vertices. Then
\begin {equation}
\mu (p_n, 1)=
\begin {cases}
1, &\ \textnormal {if $n\equiv 0$ or $1\ \mod 6$;}\\
-1, &\ \textnormal {if $n\equiv 3$ or $4\ \mod 6$;}\\
0, &\ \textnormal {otherwise.}
\end {cases}\notag
\end {equation}
\end {lm}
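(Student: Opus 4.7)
The plan is to set up a second-order linear recurrence for $\mu(p_n,1)$ from the basic matching polynomial identity, check the initial values, and observe that the resulting integer sequence has period $6$.

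First, I would apply part (c) of Theorem \ref{matching_property} to an endpoint $u$ of the path $p_n$ for $n \geq 2$. Since $u$ has a unique neighbour $v$, and deleting $u$ (resp.\ $uv$) from $p_n$ yields $p_{n-1}$ (resp.\ $p_{n-2}$), we obtain
\begin{equation}
\mu(p_n, x) = x\,\mu(p_{n-1}, x) - \mu(p_{n-2}, x),\notag
\end{equation}
with the convention $\mu(p_0, x) = 1$. Setting $a_n := \mu(p_n, 1)$ and evaluating at $x = 1$ gives the recurrence $a_n = a_{n-1} - a_{n-2}$ for $n \geq 2$.

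Next, I would record the initial values $a_0 = 1$ and $a_1 = \mu(p_1,1) = 1$, and compute the first few terms directly from the recurrence:
\begin{equation}
a_0, a_1, a_2, a_3, a_4, a_5, a_6, a_7 \;=\; 1,\,1,\,0,\,-1,\,-1,\,0,\,1,\,1.\notag
\end{equation}
Since $(a_6, a_7) = (a_0, a_1)$ and the recurrence is order two, a trivial induction on $n$ shows $a_{n+6} = a_n$ for all $n \geq 0$. Reading off the six residues modulo $6$ yields exactly the piecewise formula in the statement.

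There is no substantive obstacle here: the whole argument is the standard ``periodicity modulo a linear recurrence'' trick, and the only thing to check is that the sequence really does close up after six steps, which is a direct computation from the two base cases. I would present it as a one-paragraph induction once the recurrence $a_n = a_{n-1} - a_{n-2}$ has been extracted from Theorem \ref{matching_property}(c).
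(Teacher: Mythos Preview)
Your proposal is correct and follows essentially the same route as the paper: derive the recurrence $\mu(p_n,1)=\mu(p_{n-1},1)-\mu(p_{n-2},1)$ from Theorem~\ref{matching_property}(c), compute the first few values, and conclude by induction/periodicity. The only cosmetic difference is that you make the period-$6$ closure explicit via $(a_6,a_7)=(a_0,a_1)$, whereas the paper simply says ``by induction'' after listing $\mu(p_1,1),\dots,\mu(p_5,1)$.
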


\begin {proof} Note that for $t\geq 2$, $\mu (p_t, x)=x\mu (p_{t-1}, x)-\mu (p_{t-2}, x)$ (part (c) of Theorem \ref{matching_property}), where we define $\mu (p_0, x)=1$. Therefore $\mu (p_t, 1)=\mu (p_{t-1}, 1)-\mu (p_{t-2}, 1)$. Now $\mu (p_1, 1)=1$. So, $\mu (p_2, 1)=0$, and recursively we have $\mu (p_3, 1)=-1$, $\mu (p_4, 1)=-1$ and $\mu (p_5, 1)=0$. By induction the lemma holds.
\end {proof}

\begin {lm}\label{value_cycle}  Let $C_n$ be a cycle with $n\geq 3$ vertices.  Then
\begin {equation}
\mu (C_n, 1)=
\begin {cases}
1, &\ \textnormal {if $n\equiv 1$ or $5\ \mod 6$;}\\
-1, &\ \textnormal {if $n\equiv 2$ or $4\ \mod 6$;}\\
2, &\ \textnormal {if $n\equiv 0\ \mod 6$;}\\
-2, &\ \textnormal {if $n\equiv 3\ \mod 6$.}
\end {cases}\notag
\end {equation}
\end {lm}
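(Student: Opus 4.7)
The plan is to reduce $\mu(C_n,1)$ to two values of $\mu(p_n,1)$ supplied by Lemma~\ref{value_path}, via a single application of edge deletion, and then verify the claim by a $6$-case split on $n \bmod 6$.

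First, fix any edge $e=(u,v)$ of $C_n$. Part~(b) of Theorem~\ref{matching_property} gives
\begin{equation}
\mu(C_n,x) \;=\; \mu(C_n - e, x) - \mu(C_n \setminus uv, x).\notag
\end{equation}
Removing the edge $e$ opens the cycle into a Hamiltonian path, so $C_n - e$ is isomorphic to $p_n$; removing the two endpoints $u,v$ of $e$ from $C_n$ deletes $e$ along with the two edges adjacent to $e$, leaving the path $p_{n-2}$. Hence
\begin{equation}
\mu(C_n,1) \;=\; \mu(p_n,1) - \mu(p_{n-2},1).\notag
\end{equation}
The assumption $n\ge 3$ ensures $n-2\ge 1$, so Lemma~\ref{value_path} applies to both terms (with the convention $\mu(p_0,x)=1$ already used in that proof, should one wish to include $n=2$).

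Second, I would tabulate the values of $\mu(p_n,1)$ and $\mu(p_{n-2},1)$ according to the residue $n \bmod 6$. As $n\equiv 0,1,2,3,4,5 \pmod 6$, the residue $n-2 \bmod 6$ cycles through $4,5,0,1,2,3$, and reading off Lemma~\ref{value_path} gives the differences $1-(-1)=2$, $1-0=1$, $0-1=-1$, $-1-1=-2$, $-1-0=-1$, $0-(-1)=1$, matching the four stated cases for $n\equiv 0,1,2,3,4,5 \pmod 6$ respectively.

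There is no real obstacle: the argument is a one-line recurrence followed by a finite case analysis, and the only thing to watch is that the path interpretations of $C_n-e$ and $C_n\setminus uv$ are correct, which is immediate from the structure of a cycle.
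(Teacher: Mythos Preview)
Your proof is correct and follows essentially the same approach as the paper: reduce $\mu(C_n,1)$ to path values via one application of a recurrence from Theorem~\ref{matching_property}, then read off Lemma~\ref{value_path}. The only difference is cosmetic: the paper uses part~(c) (vertex deletion) to get $\mu(C_n,1)=\mu(p_{n-1},1)-2\mu(p_{n-2},1)$, whereas you use part~(b) (edge deletion) to get $\mu(C_n,1)=\mu(p_n,1)-\mu(p_{n-2},1)$; both lead to the same straightforward case check.
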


\begin {proof} By part (c) of Theorem \ref{matching_property}, $\mu (C_n,1)=\mu (p_{n-1},1)-2\mu (p_{n-2},1)$. The lemma follows from Lemma \ref{value_path}.
\end {proof}

\begin{thm}\label{1_elementary_cycle} A cycle $C_n$ is 1-elementary if and only if $n=3k$ for some $k\in\mathbb N$.
\end{thm}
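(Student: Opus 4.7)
The statement is a biconditional, so the plan is to treat each direction separately; the ``only if'' direction is almost immediate from the 1-super positivity requirement, while the ``if'' direction is best handled by noting that $C_n\setminus v$ is a Hamiltonian path and applying Lemma~\ref{hamiltonian_path}.

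For the forward direction, if $C_n$ is 1-elementary then in particular it is 1-super positive, so $\mu(C_n\setminus v,1)=0$ for every $v$. Since $C_n\setminus v\cong p_{n-1}$, Lemma~\ref{value_path} forces $n-1\equiv 2$ or $5\pmod 6$; equivalently $n\equiv 0\pmod 3$, so $n=3k$ for some $k\in\mathbb{N}$. This takes essentially one line.

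For the converse I assume $n=3k$ and first verify 1-super positivity. Lemma~\ref{value_cycle} gives $\mu(C_n,1)=\pm 2\neq 0$ (the sign depending on $n\bmod 6$), so $\m(1,C_n)=0$. For any $v\in V(C_n)$, $C_n\setminus v\cong p_{3k-1}$, and Lemma~\ref{value_path} gives $\mu(p_{3k-1},1)=0$. To pin the multiplicity at exactly $1$, I would invoke interlacing: removing an endpoint of $p_{3k-1}$ yields $p_{3k-2}$, and since $3k-2\equiv 1\pmod 3$ Lemma~\ref{value_path} gives $\mu(p_{3k-2},1)\neq 0$, so $\m(1,p_{3k-2})=0$. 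Lemma~\ref{basic_inequality} then forces $\m(1,p_{3k-1})\leq 1$, and combined with $\m(1,p_{3k-1})\geq 1$ this gives equality. Hence every vertex of $C_n$ is 1-positive and $C_n$ is 1-super positive.

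The heart of the proof is then to show $P_{1}(C_n\setminus v)=\varnothing$ for every $v$. The key observation is that $C_n\setminus v=p_{3k-1}$ is its own Hamiltonian path, and we have just shown $1$ is a root of its matching polynomial with $\m(1,p_{3k-1})=1$. Lemma~\ref{hamiltonian_path} then applies directly: every vertex of $p_{3k-1}$ that is not 1-essential must be 1-special. Therefore $V(p_{3k-1})=D_{1}(p_{3k-1})\cup A_{1}(p_{3k-1})$ is disjoint from $P_{1}(p_{3k-1})$, and so $P_{1}(C_n\setminus v)=\varnothing$, completing 1-elementarity. I do not anticipate a serious obstacle; the only subtlety is confirming $\m(1,p_{3k-1})=1$ so that Lemma~\ref{hamiltonian_path} applies, and the interlacing argument above dispatches it in two lines. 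Using Lemma~\ref{hamiltonian_path} lets us avoid the otherwise tedious case analysis of which vertices of $p_{3k-1}$ are 1-positive versus 1-essential according to the residue of their position mod $3$.
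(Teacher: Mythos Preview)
Your proposal is correct and follows essentially the same approach as the paper: both directions match, including the use of Lemma~\ref{value_path} and Lemma~\ref{value_cycle} to establish 1-super positivity and the appeal to Lemma~\ref{hamiltonian_path} on $p_{3k-1}$ to conclude $P_{1}(C_n\setminus v)=\varnothing$. The only cosmetic difference is in pinning down $\m(1,p_{3k-1})=1$: the paper interlaces from $C_n$ (using $\m(1,C_n)=0$) to bound $\m(1,C_n\setminus v)\le 1$, whereas you interlace from $p_{3k-1}$ down to $p_{3k-2}$; both are one-line applications of Lemma~\ref{basic_inequality} and equally valid.
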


\begin{proof} ($\Rightarrow$) Suppose $C_n$ is 1-elementary. Then for any $v\in V(C_n)$, $C_n\setminus v=p_{n-1}$.
By Lemma \ref{value_path}, $\textnormal{mult} (1,p_{n-1})>0$ if and only if $n-1\equiv 2$ or $5 \mod 6$. Thus $n=3k$ for some $k\in\mathbb N$.

\noindent
($\Leftarrow$) Suppose $n=3k$ for some $k\in\mathbb N$. By Lemma \ref{value_cycle}, $\textnormal{mult} (1,C_{n})=0$. Note that $3k\equiv 3$ or $6 \mod 6$. Therefore $3k-1\equiv 2$ or $5 \mod 6$, and by Lemma \ref{value_path} and Lemma \ref{basic_inequality}, $\textnormal{mult} (1,C_{n}\setminus v)=\textnormal{mult} (1,p_{n-1})=1$ for all $v\in V(C_n)$. Thus $C_n$ is 1-super positive. By Lemma \ref{hamiltonian_path}, $P_{1}(C_n\setminus v)=\varnothing$ for all $v\in V(C_n)$. Hence $C_n$ is 1-elementary.
\end{proof}

For our next result, let us denote the vertices of $C_{3k}$ by $1,2,3,\dots, 3k$ (see Figure 4).

\begin{center}
\begin{pspicture}(0,0)(6,3)
\cnodeput(1, 1){1}{}
\cnodeput(2, 1){2}{}
\cnodeput(3, 1){3}{}
\cnodeput(6, 1){4}{}
\ncline{1}{2}
\ncline{2}{3}
\nccurve[angle=90]{1}{4}
\psline[linestyle=dashed]{-}(3.2,1)(5.8,1)
\rput(3.5,0){Figure 4.}
\rput(0,1){$C_{3k}=$}
\rput(1,0.7){$1$}
\rput(2,0.7){$2$}
\rput(3,0.7){$3$}
\rput(6,0.7){$3k$}
\end{pspicture}
\end{center}

\begin{cor}\label{1_elementary_cycle_partition} $C_{3k}$ has exactly 3 1-barrier sets, that is
\begin{equation}
\mathfrak P(1,C_{3k})=\{
\{1,4,7,\dots, 3k-2\}, \{2,5,8,\dots, 3k-1\}, \{3,6,9,\dots, 3k\}\}.\notag
\end{equation}
\end{cor}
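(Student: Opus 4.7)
The approach is to invoke Corollary \ref{partition_charac} and reduce the question to a combinatorial counting problem on the cycle. By Theorem \ref{1_elementary_cycle}, $C_{3k}$ is $1$-elementary, so Corollary \ref{partition_charac} characterizes the $1$-barrier sets $S$ as those for which $C_{3k}\setminus S$ has exactly $|S|$ components and each is $1$-critical. Removing any nonempty $S\subseteq V(C_{3k})$ from a cycle yields a disjoint union of paths, so the task splits into (i) determining the $1$-critical paths and (ii) counting the ways to remove vertices so that every remaining arc is such a path.

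For (i), I would show that the only $1$-critical path is $p_2$. By Lemma \ref{value_path}, $\mu(p_n,1)=0$ precisely when $n\equiv 2\pmod 3$. For such $n$, applying Lemma \ref{basic_inequality} to a leaf yields $\textnormal{mult}(1,p_n)\le \textnormal{mult}(1,p_{n-1})+1=1$, and since $\mu(p_n,1)=0$ we obtain $\textnormal{mult}(1,p_n)=1$. The path $p_2$ is $1$-critical because deleting either vertex gives $p_1$ with $\mu(p_1,1)=1$, so the multiplicity drops by $1$. For $n\ge 5$ with $n\equiv 2\pmod 3$, I would consider the third vertex $u$ along the path, whose deletion produces $p_2\cup p_{n-3}$ with $n-3\equiv 2\pmod 3$; by the previous step both summands contribute a simple root at $1$, so $\textnormal{mult}(1,p_n\setminus u)=2$. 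Hence $u$ is $1$-positive rather than $1$-essential, and $p_n$ is not $1$-critical.

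For (ii), a $1$-barrier set $S$ is then precisely a subset of $V(C_{3k})$ whose removal leaves a disjoint union of $p_2$'s, one for each element of $S$. A vertex count gives $3k-|S|=2|S|$, i.e.\ $|S|=k$, and the cyclic gap between consecutive elements of $S$ on the cycle must equal $3$ throughout. Consequently $S$ is determined by the residue class modulo $3$ of its elements, yielding exactly the three sets
\[
\{1,4,\ldots,3k-2\},\quad \{2,5,\ldots,3k-1\},\quad \{3,6,\ldots,3k\}.
\]
Each of these is easily checked to satisfy the criterion of Corollary \ref{partition_charac}, which completes the proof.

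The main obstacle is the classification of $1$-critical paths; the rest is a short combinatorial step. This obstacle is cleared cleanly by exhibiting a single non-essential vertex, namely the third along the path, using the interlacing inequality together with the value table of Lemma \ref{value_path}.
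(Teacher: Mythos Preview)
Your argument is correct, but it takes a longer and more computational route than the paper's. You invoke Corollary~\ref{partition_charac} and then carry out two nontrivial subtasks: classifying the $1$-critical paths (only $p_2$ survives) and enumerating the subsets $S\subseteq V(C_{3k})$ whose removal leaves exactly $|S|$ copies of $p_2$. Both steps are handled correctly; in particular, your use of the third vertex to exhibit a $1$-positive vertex in $p_n$ for $n\geq 5$, $n\equiv 2\pmod 3$, is clean.

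The paper instead exploits Theorem~\ref{elementary_characterization} directly: once one checks that each of the three residue classes is a $1$-barrier set (removing such a class leaves $k$ disjoint copies of $K_2$, which is $1$-critical), the fact that $C_{3k}$ is $1$-elementary forces $\mathfrak P(1,C_{3k})$ to be a partition of $V(C_{3k})$. Since the three residue classes already partition $V(C_{3k})$, they must be \emph{all} the $1$-barrier sets, and no classification of $1$-critical paths or combinatorial enumeration is needed. Your approach has the merit of being self-contained at the level of Corollary~\ref{partition_charac} and yields the side result that $p_2$ is the unique $1$-critical path, but the paper's argument is considerably shorter.
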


\begin{proof} Note that $C_{3k}\setminus \{1,4,7,\dots, 3k-2\}$ is a disjoint union of $k$ number of $K_2$ and $K_2$ is 1-critical. So $\{1,4,7,\dots, 3k-2\}$ is a 1-barrier set. Similarly $\{2,5,8,\dots, 3k-1\}$ and $\{3,6,9,\dots, 3k\}$ are 1-barrier sets. It then follows from Theorem \ref{1_elementary_cycle} and Theorem \ref{elementary_characterization} that these are the only 1-barrier sets.
\end{proof}

\section{Decomposition of $\theta$-super positive graphs}

\begin{dfn}\label{base_graph} A set $X \subseteq V(G)$ with $|X|>1$ is said to be \emph{independent} in $G$ if for all $u,v\in X$, $u$ and $v$ are not adjacent to each other. A graph $G$ is said to be $\theta$-\emph{base} if it is $\theta$-super positive and for all $S\in \mathfrak P(\theta,G)$, $S$ is independent.
\end{dfn}

Note that the cycle $C_{3k}$ is $\theta$-base. In fact a connected $\theta$-base graph is $\theta$-elementary.

\begin{thm}\label{base_elementary} A connected $\theta$-base graph is $\theta$-elementary.
\end{thm}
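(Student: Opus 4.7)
The plan is to argue by contradiction: suppose $G$ is connected and $\theta$-base but not $\theta$-elementary, so by Definition \ref{ext_elementary_def} there exists $v_0\in V(G)$ with $P_\theta(G\setminus v_0)\neq\varnothing$. Set $A=A_\theta(G\setminus v_0)$. Since $G$ is $\theta$-super positive, $\textnormal{mult}(\theta,G\setminus v_0)=1$; Theorem \ref{neutral_empty_positive} gives $N_\theta(G\setminus v_0)=\varnothing$, and Corollary \ref{Gallai_Edmond_decom} then forces every component of $(G\setminus v_0)\setminus A$ to be either $\theta$-critical or entirely $\theta$-positive, i.e.\ $\theta$-super positive. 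A vertex of $P_\theta(G\setminus v_0)$ cannot lie in a $\theta$-critical component, so at least one $\theta$-super positive component $H$ must exist.

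The key step is to lift $\theta$-barriers of $H$ to $\theta$-barriers of $G$. For each $h\in V(H)$, the singleton $\{h\}$ is $\theta$-extreme in $H$ (since $h$ is $\theta$-positive in the super positive graph $H$), so by Lemma \ref{barrier_contains_extreme} there is a $\theta$-barrier $W$ of $H$ with $h\in W$. I will then show that
\[
T_W:=\{v_0\}\cup A\cup W
\]
is a $\theta$-barrier of $G$. The argument is a counting step: removing $W\subseteq V(H)$ from $(G\setminus v_0)\setminus A$ affects only the component $H$, which is replaced by $H\setminus W$ contributing $c_\theta(H\setminus W)=|W|$ $\theta$-critical components (using the barrier identity for $W$ in $H$ together with $\textnormal{mult}(\theta,H)=0$); the remaining $|A|+1$ $\theta$-critical components of $(G\setminus v_0)\setminus A$ guaranteed by Corollary \ref{Gallai_Edmond_decom}(ii) are untouched because $H$ itself is not $\theta$-critical. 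Hence $c_\theta(G\setminus T_W)=|W|+|A|+1=|T_W|$, so $T_W$ is a $\theta$-barrier of $G$ of size at least $2$.

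Because $G$ is $\theta$-base, each such $T_W$ is independent. Varying $h$ over $V(H)$ with a corresponding choice of $W\ni h$, this forces: (i) $v_0$ has no neighbour in $V(H)$ in $G$, and (ii) no vertex of $A$ is adjacent in $G$ to any vertex of $V(H)$. However, $V(H)$ is a component of $(G\setminus v_0)\setminus A$, so in $G$ any neighbour of $V(H)$ lying outside $V(H)$ must belong to $A\cup\{v_0\}$. Combined with (i) and (ii), $V(H)$ admits no outside neighbour in $G$ at all; since $H$ is connected, $V(H)$ is an entire connected component of $G$. As $v_0\notin V(H)$ and $G$ is connected, this is a contradiction. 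Therefore $P_\theta(G\setminus v)=\varnothing$ for every $v\in V(G)$, and Theorem \ref{elementary_characterization1}, using also $\textnormal{mult}(\theta,G)=0$ and $N_\theta(G\setminus v)=\varnothing$ from Theorem \ref{neutral_empty_positive}, concludes that $G$ is $\theta$-elementary.

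The main obstacle is the barrier identity for $T_W$; once that computation is in hand, the independence forced by the $\theta$-base hypothesis, applied as $h$ varies over $V(H)$, combined with connectivity of $G$, collapses the argument immediately.
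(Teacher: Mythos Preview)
Your proof is correct, and it takes a genuinely different route from the paper's. Both arguments begin the same way: assume $P_\theta(G\setminus v_0)\neq\varnothing$, use $N_\theta(G\setminus v_0)=\varnothing$ and Corollary~\ref{Gallai_Edmond_decom} to locate a $\theta$-super positive component $H$ of $(G\setminus v_0)\setminus A_\theta(G\setminus v_0)$. From there they diverge. The paper splits on whether $v_0$ is a cut vertex: if not, connectivity of $G\setminus v_0$ yields an edge $(h,w)$ with $h\in V(H)$, $w\in A_\theta(G\setminus v_0)$, and one checks that $\{h,w\}$ is $\theta$-extreme in $G$ (via $\{h,w,v_0\}$ being $\theta$-extreme and Lemma~\ref{subset_extreme}), hence sits in a barrier of size $\geq 2$, contradicting independence; if $v_0$ is a cut vertex, a $\theta$-super positive component of $G\setminus v_0$ itself supplies an edge $(u,v_0)$ with $\{u,v_0\}$ $\theta$-extreme, giving the same contradiction. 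You instead avoid the case split by building, for each $h\in V(H)$, a global barrier $T_W=\{v_0\}\cup A\cup W$ via the component-counting identity; independence of all these $T_W$ then severs every edge from $V(H)$ to $\{v_0\}\cup A$, contradicting connectivity of $G$. Your argument is more uniform and yields the slightly stronger structural observation that $V(H)$ would be an entire component of $G$, at the price of the barrier computation for $T_W$; the paper's argument is quicker once one accepts the cut-vertex dichotomy, needing only a single $\theta$-extreme adjacent pair rather than a family of large barriers.
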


\begin{proof} Let $G$ be $\theta$-base. Suppose it is not $\theta$-elementary. Then $P_{\theta}(G\setminus v)\neq \varnothing$ for some $v\in V(G)$. By Lemma \ref{existence_essential}, $G\setminus v$ has at least one $\theta$-essential vertex.

If $v$ is not a cut vertex of $G$, then $A_{\theta}(G\setminus v)\neq \varnothing$. By Theorem \ref{neutral_empty_positive} and  Corollary \ref{Gallai_Edmond_decom}, $(G\setminus v)\setminus A_{\theta}(G\setminus v)$ has a $\theta$-super positive component, say $H$. Since $G \setminus v$ is connected, there exists $h\in V(H)$ that is adjacent to some element $w \in A_{\theta}(G\setminus v)$. Note that $\{h,w, v\}$ is a $\theta$-extreme set in $G$. By Lemma \ref{subset_extreme}, $\{h,w\}$ is a $\theta$-extreme set in $G$. By Lemma \ref{barrier_contains_extreme}, $\{h,w\}$ is contained in some $S\in \mathfrak P(\theta,G)$, a contrary to the fact that $S$ is independent.

If $v$ is a cut vertex of $G$, then $G \setminus v$ contains a $\theta$-super positive component (for $N_{\theta}(G\setminus v)=\varnothing$ by Theorem \ref{neutral_empty_positive}). Clearly, some vertex in this component, say $u$, is joined to $v$ and $\{u,v\}$ is a $\theta$-extreme set in $G$. Again, by Lemma \ref{barrier_contains_extreme}, $\{u,v\}$ is contained in some $S \in \mathfrak P(\theta, G)$, a contrary to the fact that $S$ is independent.

Hence $P_{\theta}(G\setminus v)=\varnothing$ for all $v\in V(G)$ and $G$ is $\theta$-elementary.
\end{proof}

Note that the converse of Theorem \ref{base_elementary} is not true. Let $G$ be the graph in Figure 3. Note that $\{u_3,u_4\}\in \mathfrak P(1,G)$ but it is not independent.

\begin{lm}\label{decom_super_1} Let $G$ be $\theta$-super positive and $e=(u,v)\in E(G)$ such that  $\{u,v\}$ is a $\theta$-extreme set in $G$. Let $G'$ be the graph obtained by removing the edge $e$ from $G$. Then $G'$ is $\theta$-super positive.
\end{lm}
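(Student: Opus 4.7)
The plan is to verify the two conditions for $G'$ to be $\theta$-super positive: first, $\mathrm{mult}(\theta,G')=0$, and second, $\mathrm{mult}(\theta,G'\setminus w)=1$ for every $w\in V(G')$. The main tool is part (b) of Theorem \ref{matching_property} applied to the edge $e=(u,v)$ inside $G$, which (after rearranging) gives
\begin{equation}
\mu(G',x)=\mu(G,x)+\mu(G\setminus uv,x),\notag
\end{equation}
together with the analogous identity obtained by first deleting an additional vertex $w$. I would also record at the outset the key numerical consequence of the hypothesis that $\{u,v\}$ is a $\theta$-extreme set in $G$: since $\mathrm{mult}(\theta,G)=0$, the extreme condition reads $\mathrm{mult}(\theta,G\setminus uv)=2$, and in particular $\mu(G\setminus uv,\theta)=0$.

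For the first condition, evaluating the displayed identity at $x=\theta$ gives $\mu(G',\theta)=\mu(G,\theta)+0\neq 0$ because $G$ is $\theta$-super positive. Hence $\mathrm{mult}(\theta,G')=0$. For the second condition, I split on $w$. If $w=u$ or $w=v$, then $G'\setminus w=G\setminus w$ because deletion of $w$ already removes the edge $e$, so $\mathrm{mult}(\theta,G'\setminus w)=\mathrm{mult}(\theta,G\setminus w)=1$ from the $\theta$-super positivity of $G$.

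The only case requiring work is $w\neq u,v$. Applying part (b) of Theorem \ref{matching_property} to the edge $e$ inside $G\setminus w$ yields
\begin{equation}
\mu(G'\setminus w,x)=\mu(G\setminus w,x)+\mu(G\setminus uvw,x).\notag
\end{equation}
The first summand vanishes at $\theta$ since $\mathrm{mult}(\theta,G\setminus w)=1$. For the second, I would invoke the interlacing inequality (Lemma \ref{basic_inequality}) applied to $G\setminus uv$ with the vertex $w$ to conclude $\mathrm{mult}(\theta,G\setminus uvw)\geq \mathrm{mult}(\theta,G\setminus uv)-1=1$, hence this term also vanishes at $\theta$. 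Therefore $\mu(G'\setminus w,\theta)=0$, giving $\mathrm{mult}(\theta,G'\setminus w)\geq 1$. On the other hand, interlacing applied to $G'$ with the vertex $w$, together with the already established $\mathrm{mult}(\theta,G')=0$, forces $\mathrm{mult}(\theta,G'\setminus w)\leq 1$. The two bounds combine to give exactly $1$, completing the verification.

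I do not expect a substantive obstacle here: the extreme-set hypothesis is essentially designed to make the deletion identity $\mu(G',x)=\mu(G,x)+\mu(G\setminus uv,x)$ behave well at $x=\theta$, and everything else is interlacing bookkeeping. The mildest subtlety is remembering that for the vertices $w\in\{u,v\}$ the identity degenerates and one must argue via $G'\setminus w=G\setminus w$ directly, which is why the three-case split above is the natural organization.
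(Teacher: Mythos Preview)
Your proof is correct and follows essentially the same approach as the paper: apply the edge-deletion identity (part (b) of Theorem \ref{matching_property}) to $e$ in $G$ and in $G\setminus w$, handle $w\in\{u,v\}$ separately via $G'\setminus w=G\setminus w$, and use interlacing on $G\setminus uv$ to get $\mathrm{mult}(\theta,G\setminus uvw)\geq 1$. The only cosmetic difference is that you spell out the final interlacing upper bound $\mathrm{mult}(\theta,G'\setminus w)\leq 1$ explicitly, whereas the paper leaves it implicit after establishing $\mu(G'\setminus w,\theta)=0$ and $\mathrm{mult}(\theta,G')=0$.
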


\begin{proof} Now $\textnormal{mult}(\theta,G\setminus uv)=2$. By part (b) of Theorem \ref{matching_property}, $\mu (  G,x)=\mu (G',x)-\mu (  G\setminus uv, x)$. This implies that $\mu (G',\theta)=\mu ( G,\theta)\neq 0$.

It is left to show that $\mu (G'\setminus w,\theta)=0$ for all $w\in V(G')$. Clearly if $w=u$ or $v$ then $\mu (G'\setminus w,\theta)=\mu (G\setminus w,\theta)=0$. Suppose $w\neq u,v$. By part (b) of Theorem \ref{matching_property} again, $\mu (G\setminus w,x)=\mu (G'\setminus w,x)-\mu (G\setminus wuv, x)$. By Lemma \ref{basic_inequality}, $\textnormal{mult} (\theta, G\setminus uvw)\geq 1$. Therefore $\mu (G'\setminus w,\theta)=\mu (G\setminus w,\theta)=0$. Hence $G'$ is $\theta$-super positive.
\end{proof}

Note that after removing an edge from $G$ as in Lemma \ref{decom_super_1}, $\mathfrak P(\theta,G')\neq \mathfrak P(\theta,G)$ in  general. In Figure 5, $\mathfrak P(1,G)=\{ \{1,4,7\}, \{5,8\}, \{6,9\}, \{2\}, \{3\}\}$. After removing the edge $(1,4)$ from $G$, the resulting graph $G'=C_{9}$. By Corollary \ref{1_elementary_cycle_partition}, $\mathfrak P(1,G')=\{ \{1,4,7\}, \{2,5,8\}, \{3,6,9\}\}$.

\begin{center}
\begin{pspicture}(0,0)(5,3)
\cnodeput(1, 2){1}{}
\cnodeput(2, 2){2}{}
\cnodeput(3, 2){3}{}
\cnodeput(4, 2){4}{}
\cnodeput(5, 2){5}{}
\cnodeput(5, 1){6}{}
\cnodeput(3.5, 1){7}{}
\cnodeput(2.5, 1){8}{}
\cnodeput(1, 1){9}{}
\ncline{1}{2}
\ncline{2}{3}
\ncline{3}{4}
\ncline{4}{5}
\ncline{5}{6}
\ncline{6}{7}
\ncline{7}{8}
\ncline{8}{9}
\ncline{9}{1}
\nccurve[angle=90]{1}{4}
\rput(2.5,0){Figure 5.}
\rput(0,1.5){$G=$}
\rput(0.7,2){$1$}
\rput(2,2.3){$2$}
\rput(3,2.3){$3$}
\rput(4.3,2.2){$4$}
\rput(5,2.3){$5$}
\rput(5,0.7){$6$}
\rput(3.5,0.7){$7$}
\rput(2.5,0.7){$8$}
\rput(1,0.7){$9$}
\end{pspicture}
\end{center}

We shall need the following lemma.

\begin{lm}\label{path_interlacing}\textnormal {\cite[Corollary 2.5]{G}} For any root $\theta$ of $\mu(G,x)$ and a path $p$ in $G$,
\[ \textnormal{mult}  (\theta, G \setminus p) \ge \m(\theta, G)-1. \]
\end{lm}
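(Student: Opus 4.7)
The plan is to deduce the inequality from the Heilmann--Lieb identity (Lemma~\ref{heilmann_lieb}). First, the case where $p$ consists of a single vertex is immediate from the Interlacing lemma (Lemma~\ref{basic_inequality}), so I will assume $p = v_1 v_2 \cdots v_k$ with $k \ge 2$. Set $u = v_1$, $v = v_k$, and $m = \m(\theta, G)$. Since $u$ and $v$ are joined by $p$ in $G$, we have $p \in \mathcal{P}(u,v) \neq \varnothing$, and applying Heilmann--Lieb to the pair $(u,v)$ produces an identity in which $\mu(G \setminus p, x)^2$ appears as one summand on the right-hand side.

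Next, I would bound the multiplicity of $\theta$ as a root of the left-hand side from below. Two successive applications of interlacing give $\m(\theta, G \setminus u) \ge m-1$, $\m(\theta, G \setminus v) \ge m-1$, and $\m(\theta, G \setminus uv) \ge m-2$. Hence both polynomials $\mu(G \setminus u, x)\mu(G \setminus v, x)$ and $\mu(G, x)\mu(G \setminus uv, x)$ have $\theta$ as a root of multiplicity at least $2m-2$, and therefore so does their difference.

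On the right-hand side, the key observation is that a sum of squares of real polynomials cannot cancel in its leading term at a real point. Writing $\mu(G \setminus q, x) = (x-\theta)^{k_q} h_q(x)$ with $k_q = \m(\theta, G \setminus q)$ and $h_q(\theta) \neq 0$ (matching polynomials are monic and so never identically zero), and letting $k_{*} = \min_q k_q$, the leading coefficient at $\theta$ of $\sum_q \mu(G \setminus q, x)^2$ equals $\sum_{q : k_q = k_{*}} h_q(\theta)^2 > 0$. Hence the right-hand side has multiplicity exactly $2 \min_q \m(\theta, G \setminus q)$ at $\theta$. Combining the two bounds yields $2 \min_q \m(\theta, G \setminus q) \ge 2m-2$, and specializing to $q = p \in \mathcal{P}(u,v)$ gives $\m(\theta, G \setminus p) \ge m-1$, as required.

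The main subtlety I expect is the noncancellation argument for the sum of squares at $\theta$, but since each surviving leading coefficient is a strictly positive square of a real number, no cancellation can occur; one also has to be slightly careful that $\mathcal{P}(u,v)$ is nonempty, which is automatic because $p$ itself is a $u$--$v$ path. With these points in hand, the argument reduces to an interlacing-based bookkeeping exercise on Heilmann--Lieb.
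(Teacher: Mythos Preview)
Your argument is correct. The paper itself does not supply a proof of this lemma; it merely cites it as \cite[Corollary 2.5]{G} (Godsil, \emph{Algebraic matching theory}). Your approach---applying the Heilmann--Lieb identity to the endpoints of $p$, bounding the left-hand side via interlacing, and using the positivity of leading coefficients in a sum of squares of real polynomials to prevent cancellation on the right---is precisely the argument Godsil uses in the cited reference, so there is nothing to contrast.
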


\begin{lm}\label{decom_super_unique} Let $G$ be $\theta$-super positive and $e_1=(u,v)\in E(G)$ with $\{u,v\}$ is a $\theta$-extreme set. Let $G'=G-e_1$ and $e_2=(w,z)\in E(G')$. Then $\{w,z\}$ is a $\theta$-extreme set in $G'$ if and only if it is a $\theta$-extreme set in $G$.
\end{lm}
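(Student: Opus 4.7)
The plan is to split on whether $\{u,v\}$ and $\{w,z\}$ share a vertex. If $w=u$ (the only nontrivial shared case, since $e_{2}\neq e_{1}$ forces $z\neq v$), deleting $w=u$ already removes the edge $e_{1}$, so $G'\setminus wz = G\setminus wz$. Because both $G$ and $G'$ are $\theta$-super positive (the latter by Lemma~\ref{decom_super_1}), extremeness of $\{w,z\}$ in either graph amounts to $\textnormal{mult}(\theta,\,\cdot\,\setminus wz)=2$, the same condition in both. So assume $\{u,v\}\cap\{w,z\}=\emptyset$ from now on.

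In this main case, $e_{1}=(u,v)$ remains an edge of $G\setminus wz$, so Theorem~\ref{matching_property}(b) yields
\[
\mu(G'\setminus wz,x) \;=\; \mu(G\setminus wz,x)+\mu(G\setminus uvwz,x).
\]
Interlacing (Lemma~\ref{basic_inequality}) places $\textnormal{mult}(\theta,G\setminus wz)$ and $\textnormal{mult}(\theta,G'\setminus wz)$ in $\{0,1,2\}$, with the value $2$ equivalent to extremeness. The desired equivalence therefore reduces to the key claim $\textnormal{mult}(\theta,G\setminus uvwz)\ge 2$: once this is established, if $\textnormal{mult}(\theta,G\setminus wz)=2$ then both summands on the right-hand side have multiplicity $\ge 2$ at $\theta$, forcing $\textnormal{mult}(\theta,G'\setminus wz)=2$; and rewriting the identity as $\mu(G\setminus wz,x)=\mu(G'\setminus wz,x)-\mu(G\setminus uvwz,x)$ gives the reverse implication.

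For the key claim I apply the Heilmann--Lieb Identity (Lemma~\ref{heilmann_lieb}) to $G\setminus uv$ with vertex pair $\{w,z\}$:
\[
\mu(G\setminus uvw,x)\,\mu(G\setminus uvz,x)-\mu(G\setminus uv,x)\,\mu(G\setminus uvwz,x) \;=\; \sum_{p\in\mathcal P_{G\setminus uv}(w,z)}\mu(G\setminus uv\setminus p,x)^{2}.
\]
Since $e_{2}=(w,z)$ is an edge in $G\setminus uv$, the single-edge path $p_{0}=(w,z)$ belongs to $\mathcal P_{G\setminus uv}(w,z)$ and contributes $\mu(G\setminus uvwz,x)^{2}$. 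The right-hand side, being a sum of squares, has multiplicity at $\theta$ equal to $2\min_{p}\textnormal{mult}(\theta,G\setminus uv\setminus p)$; hence if the left-hand side vanishes to order $\ge 4$ at $\theta$, then $\min_{p}\textnormal{mult}(\theta,G\setminus uv\setminus p)\ge 2$, and taking $p=p_{0}$ yields $\textnormal{mult}(\theta,G\setminus uvwz)\ge 2$. Using $\textnormal{mult}(\theta,G\setminus uv)=2$ and the interlacing bounds $\textnormal{mult}(\theta,G\setminus uvw),\textnormal{mult}(\theta,G\setminus uvz)\ge 1$, both products on the left already vanish to order $\ge 2$ at $\theta$; evaluating first derivatives at $\theta$ (using $\mu'(G\setminus uv,\theta)=0$ since $\textnormal{mult}(\theta,G\setminus uv)=2$) raises the order to $\ge 3$, while the second-derivative evaluation reduces the left-hand side at $\theta$ to $2\,\mu'(G\setminus uvw,\theta)\,\mu'(G\setminus uvz,\theta)$.

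The main obstacle is the last step: showing this second-derivative term vanishes, equivalently that at least one of $\textnormal{mult}(\theta,G\setminus uvw)$, $\textnormal{mult}(\theta,G\setminus uvz)$ is at least $2$. I expect this to follow by a symmetric application of Heilmann--Lieb to $G\setminus wz$ with the pair $\{u,v\}$ (still an edge pair there), exploiting the edge hypothesis $(w,z)\in E(G)$; alternatively, Lemma~\ref{barrier_contains_extreme} together with Lemma~\ref{barrier_exclusion} applied to a barrier containing $\{u,v\}$ (or $\{w,z\}$) should supply the required multiplicity bound via Corollary~\ref{Gallai_Edmond_decom}.
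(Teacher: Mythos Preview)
Your Case~1 is fine and matches the paper. The gap is in Case~2: you stake everything on the ``key claim'' $\m(\theta,G\setminus uvwz)\ge 2$, but you never prove it. Your Heilmann--Lieb computation correctly reduces it to showing that at least one of $\m(\theta,G\setminus uvw)$, $\m(\theta,G\setminus uvz)$ is $\ge 2$, and then you stop with ``I expect this to follow by\ldots''. Neither of your suggested routes closes the gap: applying Heilmann--Lieb symmetrically to $G\setminus wz$ with the pair $\{u,v\}$ would need $\m(\theta,G\setminus wz)=2$, which is exactly what is at issue; and the barrier-set suggestion is too vague to evaluate. Worse, the key claim is \emph{false unconditionally}: in the graph of Figure~3 with $e_1=(u_3,u_4)$ and $(w,z)=(u_1,u_5)$, one has $\m(1,G\setminus u_3u_4u_1u_5)=\m(1,K_2)=1$. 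So any valid proof of your key claim would have to use the hypothesis that $\{w,z\}$ is extreme in $G$ or $G'$ --- at which point you are essentially proving the lemma by another route.

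The paper avoids all of this with two much lighter ingredients. First, since $(w,z)$ is an edge (hence a path) in $G\setminus uv$, Lemma~\ref{path_interlacing} gives $\m(\theta,G\setminus uvwz)\ge \m(\theta,G\setminus uv)-1=1$. Second --- and this is the step you are missing --- Theorem~\ref{neutral_empty_positive} applied to the $\theta$-super positive graphs $G$ and $G'$ says $N_\theta(G\setminus w)=N_\theta(G'\setminus w)=\varnothing$, so $\m(\theta,G\setminus wz)$ and $\m(\theta,G'\setminus wz)$ can only be $0$ or $2$, never $1$. Combined with your identity $\mu(G'\setminus wz,x)=\mu(G\setminus wz,x)+\mu(G\setminus uvwz,x)$, if one side has multiplicity $2$ then the other is forced to be $\ge 1$, hence $=2$. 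You already invoked Theorem~\ref{neutral_empty_positive} implicitly in Case~1; using it here as well gives a two-line finish and removes the need for your stronger key claim entirely.
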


\begin{proof} {\bf Case 1}. Suppose  $e_1$ and $e_2$ have a vertex in common, say $w=u$. Then $G'\setminus wz=G\setminus wz$.

($\Rightarrow$) Suppose $\{w,z\}$ is a $\theta$-extreme set in $G'$.  By Lemma \ref{decom_super_1}, $\textnormal{mult} (\theta, G')=0$. Therefore $\textnormal{mult} (\theta, G\setminus wz)=\textnormal{mult} (\theta, G'\setminus wz)=2$ and $\{w,z\}$ is a $\theta$-extreme set in $G$.

($\Leftarrow$) The converse is proved similarly.

\noindent
{\bf Case 2}. Suppose  $e_1$ and $e_2$ have no vertex in common. By part (b) of Theorem \ref{matching_property},
\begin{equation}
\mu (G\setminus wz,x)=\mu (  G'\setminus wz,x)-\mu (  G\setminus wzuv, x).\notag
\end{equation}

($\Rightarrow$) Suppose $\{w,z\}$ is a $\theta$-extreme set in $G'$. Then $\textnormal{mult} (\theta, G'\setminus wz)=2$. Now $\textnormal{mult} (\theta, G\setminus uv)=2$ and by Lemma \ref{path_interlacing},  $\textnormal{mult} (\theta, G\setminus uvwz)\geq 1$. So we conclude that $\textnormal{mult} (\theta, G\setminus wz)\geq 1$. On the other hand,  $N_{\theta}(G\setminus w)=\varnothing$ (Theorem \ref{neutral_empty_positive}). Therefore either $\textnormal{mult} (\theta, G\setminus wz)=0$ or $2$. Hence the latter holds and $\{w,z\}$ is a $\theta$-extreme set in $G$.

($\Leftarrow$) Suppose $\{w,z\}$ is a $\theta$-extreme set in $G$.  Then $\textnormal{mult} (\theta, G\setminus wz)=2$. As before we have  $\textnormal{mult} (\theta, G\setminus uvwz)\geq 1$. So we conclude that $\textnormal{mult} (\theta, G'\setminus wz)\geq 1$. On the other hand, by Lemma \ref{decom_super_1}, $G'$ is $\theta$-super positive. Therefore $N_{\theta}(G'\setminus w)=\varnothing$ (Theorem \ref{neutral_empty_positive}), and then either $\textnormal{mult} (\theta, G'\setminus wz)=0$ or $2$. Hence the latter holds and $\{w,z\}$ is a $\theta$-extreme set in $G'$.
\end{proof}

\begin{dfn}\label{edge_extreme} Let $G$ be $\theta$-super positive. An edge $e=(u,v)\in E(G)$ is said to be $\theta$-\emph {extreme} in $G$ if $\{u,v\}$ is a $\theta$-extreme set.
\end{dfn}

The process described in Lemma \ref{decom_super_1}, can be iterated. Let $Y_0=\{e_1,e_2,\dots, e_k\}\subseteq E(G)$ be the set of all $\theta$-extreme edges.  Let $G_1=G-e_1$. Then $G_1$ is $\theta$-super positive (Lemma \ref{decom_super_1}). Let $Y_1$ be the set of all $\theta$-extreme edges in $G_1$. Then by Lemma \ref{decom_super_unique}, $Y_1=Y_0\setminus \{e_1\}$. Now let $G_2=G_1-e_2$. By applying Lemma \ref{decom_super_1} and Lemma \ref{decom_super_unique}, we see that $G_2$ is $\theta$-super positive and the set of all $\theta$-extreme edges in $G_2$ is $Y_2=Y_0\setminus \{e_1, e_2\}$. By continuing this process, after $k$ steps, we see that $G_k=G-e_1e_2\dots e_k$ is $\theta$-super positive and the set of all $\theta$-extreme edges in $G_k$ is $Y_k=\varnothing$. We claim that $G_k$ is a disjoint union of $\theta$-base graphs. Suppose the contrary. Let $H$ be a component of $G_k$ that is not $\theta$-base. Since $G_k$ is $\theta$-super positive, by part (a) of Theorem \ref{matching_property}, we deduce that $H$ is $\theta$-super positive. Therefore there is a $S\in\mathfrak P(\theta,H)$ for which $S$ is not independent. Let $e=(u,v)\in E(H)$ with $\{u,v\}\subseteq S$. By Lemma \ref{barrier_is_extreme} and Lemma \ref{subset_extreme}, $\{u,v\}$ is a $\theta$-extreme set in $H$. This means that $e$ is $\theta$-extreme in $H$, and by part (a) of Theorem \ref{matching_property}, $e$ is $\theta$-extreme in $G_k$, a contrary to the fact that $Y_k=\varnothing$. Hence $H$ is $\theta$-base and we have proved the following theorem.

\begin{thm}\label{decom_super_2} Let $G$ be $\theta$-super positive. Then $G$ can be decomposed into a disjoint union of $\theta$-base graphs by deleting its $\theta$-extreme edges. Furthermore, the decomposition is unique, i.e. the $\theta$-base graphs are uniquely determined by $G$.
\end{thm}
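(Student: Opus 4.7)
My plan is to exploit the iterative machinery that Lemmas \ref{decom_super_1} and \ref{decom_super_unique} make available: remove the $\theta$-extreme edges of $G$ one at a time, check that the relevant structure is preserved at each step, and then analyse what must be left over.

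Concretely, let $Y_0 = \{e_1, \ldots, e_k\}$ be the (finite) set of all $\theta$-extreme edges in $G$, and set $G_0 = G$. I would define $G_i = G_{i-1} - e_i$ for $1 \le i \le k$ and prove by induction on $i$ that (a) $G_i$ is $\theta$-super positive, and (b) the set of $\theta$-extreme edges in $G_i$ equals $Y_0 \setminus \{e_1, \ldots, e_i\}$. Step (a) is immediate from Lemma \ref{decom_super_1}, since at step $i$ the edge $e_i$ remains $\theta$-extreme in $G_{i-1}$ by the inductive hypothesis (b) for $i-1$. For step (b) I would apply Lemma \ref{decom_super_unique} with $G'= G_i$, $G = G_{i-1}$, and $e_1 = e_i$: this lemma says precisely that $\theta$-extremeness of an edge is preserved in both directions when we delete a $\theta$-extreme edge, so $Y_0 \setminus \{e_1, \ldots, e_i\}$ is exactly the set of $\theta$-extreme edges of $G_i$. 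After $k$ steps, $G_k := G - e_1 \cdots e_k$ is $\theta$-super positive and has no $\theta$-extreme edges at all.

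The main claim is then that every component $H$ of $G_k$ is $\theta$-base. This is where the argument is most delicate. Since $G_k$ is $\theta$-super positive, part (a) of Theorem \ref{matching_property} applied componentwise forces each $H$ to be $\theta$-super positive as well. Suppose for contradiction that some component $H$ fails to be $\theta$-base. Then by Definition \ref{base_graph} there is $S \in \mathfrak{P}(\theta, H)$ and distinct $u, v \in S$ with $(u,v) \in E(H)$. By Lemma \ref{barrier_is_extreme} $S$ is a $\theta$-extreme set in $H$, and by Lemma \ref{subset_extreme} its subset $\{u,v\}$ is $\theta$-extreme in $H$. Using part (a) of Theorem \ref{matching_property} once more, $\theta$-extremeness of $\{u,v\}$ lifts from $H$ to the whole of $G_k$, so $(u,v)$ is a $\theta$-extreme edge of $G_k$, contradicting $Y_k = \varnothing$. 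Hence each component of $G_k$ is $\theta$-base, and $G_k$ is a disjoint union of $\theta$-base graphs.

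For uniqueness, I would observe that the set of $\theta$-extreme edges of $G$ is an intrinsic invariant of $G$, and by the iterated application of Lemma \ref{decom_super_unique} the set of edges deleted in the procedure above coincides with $Y_0$ regardless of the order chosen; the resulting graph $G_k = G - E_0$, where $E_0$ is the set of $\theta$-extreme edges of $G$, is therefore uniquely determined by $G$, and so are its components. I expect the cleanest obstacle to be keeping the two roles of Lemma \ref{decom_super_unique} straight: in step (b) it tells me \emph{which} edges remain $\theta$-extreme after a deletion, and in the final claim it ensures that a $\theta$-extreme edge inside a component of $G_k$ is still $\theta$-extreme in $G_k$ (via the multiplicativity of $\mu$ over components). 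Beyond that, the argument is a fairly direct assembly of the previously established lemmas.
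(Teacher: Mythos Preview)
Your proposal is correct and matches the paper's proof essentially line for line: iterate Lemmas \ref{decom_super_1} and \ref{decom_super_unique} to delete all $\theta$-extreme edges, then use Lemmas \ref{barrier_is_extreme} and \ref{subset_extreme} together with multiplicativity of $\mu$ to show each surviving component is $\theta$-base. Your treatment of uniqueness is slightly more explicit than the paper's, but the idea is identical.
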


The proof of the next lemma is similar to Lemma \ref{decom_super_1}, and is thus omitted.
\begin{lm}\label{construct_super_1} Let $G$ be $\theta$-super positive and  $\{u,v\}$ is a $\theta$-extreme set with $e=(u,v)\notin  E(G)$. Let $G'$ be the graph obtained by adding the edge $e$ to $G$. Then $G'$ is $\theta$-super positive.
\end{lm}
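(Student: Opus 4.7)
The plan is to mimic the argument used for Lemma \ref{decom_super_1}, but with the roles of $G$ and $G'$ essentially swapped. The key algebraic identity comes from part (b) of Theorem \ref{matching_property} applied to the new edge $e=(u,v)$ in $G'$: since $G'-e=G$ and $G'\setminus uv=G\setminus uv$, we get $\mu(G',x)=\mu(G,x)-\mu(G\setminus uv,x)$. Everything follows from analyzing the two terms on the right at $x=\theta$.

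First I would show $\mu(G',\theta)\neq 0$. Since $G$ is $\theta$-super positive we have $\mu(G,\theta)\neq 0$, and since $\{u,v\}$ is a $\theta$-extreme set with $\textnormal{mult}(\theta,G)=0$, by definition $\textnormal{mult}(\theta,G\setminus uv)=2$, so $\mu(G\setminus uv,\theta)=0$. The displayed identity then gives $\mu(G',\theta)=\mu(G,\theta)\neq 0$.

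Next I would verify $\mu(G'\setminus w,\theta)=0$ for every $w\in V(G')=V(G)$, splitting into cases. If $w\in\{u,v\}$, then deleting $w$ also kills the edge $e$, so $G'\setminus w=G\setminus w$ and we are done since $G$ is $\theta$-super positive. For $w\notin\{u,v\}$, apply part (b) of Theorem \ref{matching_property} to $G'\setminus w$ with the edge $e$ to obtain $\mu(G'\setminus w,x)=\mu(G\setminus w,x)-\mu(G\setminus wuv,x)$. The first term vanishes at $\theta$ by $\theta$-super positivity of $G$. For the second term, interlacing (Lemma \ref{basic_inequality}) applied to the deletion of $w$ from $G\setminus uv$ gives $\textnormal{mult}(\theta,G\setminus wuv)\geq \textnormal{mult}(\theta,G\setminus uv)-1=1$, so $\mu(G\setminus wuv,\theta)=0$ as well. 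Hence $\mu(G'\setminus w,\theta)=0$, completing the proof.

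There is no real obstacle here; the argument is completely parallel to Lemma \ref{decom_super_1}, with the only conceptual point being the observation that $\textnormal{mult}(\theta,G\setminus uv)=2$ (from $\{u,v\}$ being $\theta$-extreme together with $\textnormal{mult}(\theta,G)=0$) does double duty: it makes the correction term in the identity for $\mu(G',x)$ vanish at $\theta$, and, via one application of interlacing, it forces the correction term for $\mu(G'\setminus w,x)$ to vanish at $\theta$ as well.
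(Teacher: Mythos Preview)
Your argument is correct and is exactly the parallel to Lemma \ref{decom_super_1} that the paper intends: the paper omits the proof, saying it is ``similar to Lemma \ref{decom_super_1}'', and your write-up carries out precisely that adaptation.
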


Using the process described in Lemma \ref{construct_super_1}, we can construct $\theta$-super positive graph from $\theta$-base graphs. Together with Theorem \ref{decom_super_2}, we see that every $\theta$-super positive can be constructed from $\theta$-base graphs.

\begin{cor}\label{construction_positive1} A graph is $\theta$-super positive if and only if it can be constructed from $\theta$-base graphs.
\end{cor}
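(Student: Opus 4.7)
The plan is to read off this corollary as a direct packaging of Theorem \ref{decom_super_2} together with Lemma \ref{construct_super_1}; the content of the statement is that the construction procedure implicit in Lemma \ref{construct_super_1} (adding an edge whose endpoints already form a $\theta$-extreme pair) is both sound and complete for producing $\theta$-super positive graphs starting from $\theta$-base pieces.

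For the ($\Leftarrow$) direction, I would first note that a disjoint union of $\theta$-base graphs is itself $\theta$-super positive: $\theta$-base graphs are $\theta$-super positive by definition, and part (a) of Theorem \ref{matching_property} applied to $\mu(G_1 \cup G_2, x)$ and to $\mu((G_1 \cup G_2)\setminus v, x)$ (splitting according to whether $v$ lies in $G_1$ or $G_2$) shows that $\textnormal{mult}(\theta, \cdot) = 0$ on the whole graph and $=1$ after deleting any vertex. Then a straightforward induction on the number of added edges, each step invoking Lemma \ref{construct_super_1}, shows that any graph reachable by the construction process is $\theta$-super positive.

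For the ($\Rightarrow$) direction, let $G$ be $\theta$-super positive. Apply Theorem \ref{decom_super_2} to obtain its (unique) $\theta$-extreme edges $e_1, \ldots, e_k$, and form the sequence $G = G_0, G_1 = G_0 - e_1, \ldots, G_k = G_{k-1} - e_k$, where $G_k$ is a disjoint union of $\theta$-base graphs. Reversing the order and adding back $e_k, e_{k-1}, \ldots, e_1$ gives a construction of $G$ from $G_k$; the only thing to check is that at each reverse step, the edge $e_i = (u,v)$ being reinserted has $\{u,v\}$ forming a $\theta$-extreme set in the current intermediate graph $G_i$, which is the hypothesis required by Lemma \ref{construct_super_1}.

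The one technical point, which I expect to be the only mild obstacle, is this verification. The key observation is that the $\theta$-extreme property of a pair $\{u,v\}$ depends only on $\textnormal{mult}(\theta, H)$ and $\textnormal{mult}(\theta, H\setminus uv)$, and removing the vertices $u,v$ wipes out every edge incident to them, so $\textnormal{mult}(\theta, G_i \setminus uv) = \textnormal{mult}(\theta, G_{i-1}\setminus uv) = 2$ (the latter because $e_i$ is $\theta$-extreme in $G_{i-1}$). Since $G_i$ is $\theta$-super positive by iterated application of Lemma \ref{decom_super_1}, we have $\textnormal{mult}(\theta, G_i) = 0$, so $\{u,v\}$ is indeed a $\theta$-extreme set in $G_i$, and Lemma \ref{construct_super_1} applies to reinsert $e_i$ while preserving $\theta$-super positivity. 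Iterating this completes the construction of $G$ and hence the proof.
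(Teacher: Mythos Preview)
Your proposal is correct and follows the same approach as the paper, which simply reads the corollary off from Theorem \ref{decom_super_2} and Lemma \ref{construct_super_1} in two sentences. Your verification that the reinserted edge $e_i=(u,v)$ is $\theta$-extreme in $G_i$ via the observation $G_i\setminus uv = G_{i-1}\setminus uv$ is a clean way to make explicit what the paper leaves implicit (the paper's route would be to invoke Lemma \ref{decom_super_unique} in reverse, but your direct argument is simpler).
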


In the next theorem, we shall extend Theorem \ref{construction_positive}.

\begin{thm}\label{construction_positive2} Let $G_1$ and $G_2$ be two $\theta$-super positive graphs and $S_i\in \mathfrak P(\theta, G_i)$ for $i=1,2$. Let $G$ be the graph obtained by adding the edges $e_1,e_2,\dots, e_m$ to the union of $G_1$ and $G_2$, where each $e_j$ contains a point in $S_1$ and $S_2$. Then $G$ is $\theta$-super positive.
\end{thm}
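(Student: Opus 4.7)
My plan is to add the edges $e_1,\dots,e_m$ one at a time and apply Lemma \ref{construct_super_1} at each step. Let $H_0 = G_1 \cup G_2$ and $H_j = H_{j-1} + e_j$ for $j = 1,\dots,m$, so that $H_m = G$. I will prove by induction on $j$ the stronger statement: $H_j$ is $\theta$-super positive and $S_1 \cup S_2 \in \mathfrak P(\theta, H_j)$.

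For the base case $j=0$: since $G_1$ and $G_2$ are disjoint, part (a) of Theorem \ref{matching_property} shows that $H_0$ is $\theta$-super positive. To see that $S_1 \cup S_2$ is a $\theta$-barrier set in $H_0$, note that $H_0 \setminus (S_1 \cup S_2) = (G_1 \setminus S_1) \cup (G_2 \setminus S_2)$, and since each $S_i$ is a $\theta$-barrier set in a graph with $\textnormal{mult}(\theta, G_i) = 0$, we have $c_\theta(G_i \setminus S_i) = |S_i|$. Adding up, $c_\theta(H_0 \setminus (S_1 \cup S_2)) = |S_1|+|S_2| = |S_1 \cup S_2| = \textnormal{mult}(\theta, H_0) + |S_1 \cup S_2|$, so $S_1 \cup S_2 \in \mathfrak P(\theta, H_0)$.

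For the inductive step, assume $H_{j-1}$ is $\theta$-super positive and $S_1 \cup S_2 \in \mathfrak P(\theta, H_{j-1})$. Write $e_j = (u_j, v_j)$ with $u_j \in S_1$ and $v_j \in S_2$. By Lemma \ref{barrier_is_extreme}, $S_1 \cup S_2$ is a $\theta$-extreme set in $H_{j-1}$, and by Lemma \ref{subset_extreme}, so is $\{u_j, v_j\}$. Since $e_j \notin E(H_{j-1})$ (the edges $e_1,\dots,e_m$ are distinct new edges between $V(G_1)$ and $V(G_2)$), Lemma \ref{construct_super_1} gives that $H_j = H_{j-1} + e_j$ is $\theta$-super positive. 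The main thing left to verify is that $S_1 \cup S_2$ remains a $\theta$-barrier set in $H_j$; this is where the hypothesis that every $e_\ell$ has one endpoint in $S_1$ and one in $S_2$ is crucial. Because both endpoints of $e_j$ lie in $S_1 \cup S_2$, deletion of $S_1 \cup S_2$ kills $e_j$, so $H_j \setminus (S_1 \cup S_2) = H_{j-1} \setminus (S_1 \cup S_2)$, giving $c_\theta(H_j \setminus (S_1 \cup S_2)) = |S_1 \cup S_2|$; combined with $\textnormal{mult}(\theta, H_j) = 0$, this confirms $S_1 \cup S_2 \in \mathfrak P(\theta, H_j)$.

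The inductive step closes and $H_m = G$ is $\theta$-super positive. The only subtle step is maintaining $S_1 \cup S_2$ as a barrier (equivalently, as an extreme set) throughout the process, and the observation that the deletion $H_j \setminus (S_1 \cup S_2)$ is independent of $j$ handles this cleanly.
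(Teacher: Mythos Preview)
Your proof is correct and follows essentially the same approach as the paper: induction on the number of added edges, with the key observation that $S_1 \cup S_2$ is a $\theta$-barrier set in each intermediate graph (since $H_j \setminus (S_1 \cup S_2) = (G_1 \setminus S_1) \cup (G_2 \setminus S_2)$ regardless of $j$), so that $\{u_j,v_j\}$ is $\theta$-extreme and Lemma~\ref{construct_super_1} applies. The only cosmetic difference is that the paper starts the induction at $m=1$ via Theorem~\ref{construction_positive} and recomputes the barrier property inside the inductive step, whereas you start at $j=0$ and carry the barrier property as part of a strengthened inductive hypothesis.
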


\begin{proof} We shall prove by induction on $m$. If $m=1$,  we are done by Theorem \ref{construction_positive}. Suppose $m\geq 2$. Assume that it is true for $m-1$. Let $G'$ be the graph obtained by adding the edges $e_1,e_2,\dots, e_{m-1}$ to the union of $G_1$ and $G_2$. By induction $G'$ is $\theta$-super positive. Let $e_m=(v_1,v_2)$ where $v_i\in S_i$. Note that the number of $\theta$-critical components in $G'\setminus (S_1\cup S_2)$ is $c_{\theta}(G'\setminus (S_1\cup S_2))=c_{\theta}(G_1\setminus S_1)+c_{\theta}(G_2\setminus S_2) = |S_{1}|+|S_{2}|$. So $S_1\cup S_2$ is a $\theta$-barrier set in $G'$. By Lemma \ref{barrier_is_extreme} and Lemma \ref{subset_extreme}, $\{v_1,v_2\}$ is a $\theta$-extreme set in $G'$. Therefore by Lemma \ref{construct_super_1}, $G$ is $\theta$-super positive.
\end{proof}

In Figure 6, the graph $G$ is obtained from two $1$-base graphs by adding edges $e_1$ and $e_2$.

\begin{center}
\begin{pspicture}(0,0)(10,5)
\cnodeput(1, 3){1}{}
\cnodeput(2, 3){2}{}
\cnodeput(3, 3){3}{}
\cnodeput(4, 3){4}{}
\cnodeput(5, 3){5}{}
\cnodeput(5, 2){6}{}
\cnodeput(3.5, 2){7}{}
\cnodeput(2.5, 2){8}{}
\cnodeput(1, 2){A}{}
\cnodeput(7, 4){10}{}
\cnodeput(7, 2){11}{}
\cnodeput(8, 3){12}{}
\cnodeput(9, 3){B}{}
\cnodeput(10, 2){14}{}
\cnodeput(10, 4){15}{}
\ncline{1}{2}
\ncline{2}{3}
\ncline{3}{4}
\ncline{4}{5}
\ncline{5}{6}
\ncline{6}{7}
\ncline{7}{8}
\ncline{8}{A}
\ncline{A}{1}
\nccurve[angleA=-45, angleB=-110]{A}{B}
\nccurve[angle=-90]{6}{12}
\ncline{10}{B}
\ncline{10}{12}
\ncline{10}{11}
\ncline{11}{12}
\ncline{12}{15}
\ncline{B}{14}
\ncline{B}{15}
\ncline{14}{15}
\rput(6,0.5){Figure 6.}
\rput(0.5,2.5){$G=$}
\rput(6,1.7){$e_1$}
\rput(4,1){$e_2$}
\end{pspicture}
\end{center}

\end{document}